\documentclass[a4paper,12pt]{amsart}
\usepackage{fullpage}




\usepackage{bm}
\usepackage{etex}
\usepackage[pdftex]{graphicx}
\usepackage[hyphens]{url}
\usepackage{array}
\usepackage{natbib}
\usepackage{amsthm}

\theoremstyle{plain}
\newtheorem{thm}{Theorem}
\newtheorem{prop}[thm]{Proposition}
\theoremstyle{definition}
\newtheorem{defn}{Definition}

\DeclareMathOperator{\Expectation}{\mathbb E}

\DeclareMathOperator{\Hessian}{Hess}
\DeclareMathOperator{\POL}{POL}

\DeclareMathOperator{\Var}{Var}
\DeclareMathOperator{\diag}{diag}
\DeclareMathOperator{\natnabla}{\widetilde\nabla}
\newcommand{\absoluteval}[1]{\left\vert#1\right\vert}
\newcommand{\derivby}[1]{\frac{d}{d#1}}
\newcommand{\diagof}[1]{\diag\left(#1\right)}
\newcommand{\euler}{\mathrm{e}}
\newcommand{\expectat}[2]{{\mathbb E}_{#1}\left[#2\right]}
\newcommand{\expectof}[1]{\Expectation\left[#1\right]}
\newcommand{\expof}[1]{\exp\left(#1\right)}

\newcommand{\hessianof}[1]{\Hessian #1}
\newcommand{\logof}[1]{\log\left(#1\right)}
\newcommand{\polof}[1]{\POL\left(#1\right)}

\newcommand{\reals}{\mathbb R}
\newcommand{\scalarat}[3]{\left\langle#2,#3\right\rangle_{#1}}
\newcommand{\setof}[2]{\left\{#1\middle|#2\right\}}
\newcommand{\set}[1]{\left\{#1\right\}}

\newcommand{\varat}[2]{\Var_{#1}\left(#2\right)}

\title[Polarization]{The gradient flow of the polarization measure. \\ With an appendix.}

\author[G. Pistone]{Giovanni Pistone}
\address{G. Pistone: de Castro Statistics, Collegio Carlo Alberto, Via Real Collegio 30, 10024 Moncalieri, Italy}
\email{giovanni.pistone@carloalberto.org}
\urladdr{www.giannidiorestino.it}

\author[M.-P. Rogantin]{Maria Piera Rogantin}
\address{M.-P. Rogantin: Dipartimento di Matematica, Via Dodecaneso, 35, 16146 Genova, Italy}
\email{rogantin@dima.unige.it}
\urladdr{www.dima.unige.it/rogantin}

\thanks{ 
\paragraph{Acknowledgments} G. Pistone is supported by De Castro Statistic, Collegio Carlo Alberto, Moncalieri. The Authors whish to thank A. Bacciotti (Politecnico di Torino), M. Gasparini (Politecnico di Torino) and L. Malag\`o (Shinshu University) for helpful suggestions.
} 

\date{\today}

\begin{document}

\begin{abstract}
The polarization measure is the probability that among 3 individuals chosen at random from a finite population exactly 2 come from the same class. This index is maximum at the midpoints of the edges of the probability simplex. We compute the gradient flow of this index that is the differential equation whose solutions are the curves of steepest ascent. Tools from Information Geometry are extensively used. In a time series, a comparison of the estimated velocity of variation with the direction of the gradient field should be a better index than the simple variation of the index.
\end{abstract}

\maketitle
\tableofcontents
\bibliographystyle{plainnat}

\section{Introduction}

Given a discrete distribution $\pi$ on $n+1$ classes $x = 0,1,\dots,n$, we consider an index called \emph{polarization measure}, defined by
\begin{equation}\label{eq:POL}
  \polof{\pi} = \sum_{x=0}^n \pi_x^2(1-\pi_x).
\end{equation}

The polarization measure has been introduced in Economics for real distributions by \citet{esterban|ray:1994}. The discrete version we consider here has been used in \citet[ p. 10]{pino|vidal-robert:2013}.

The polarization measure has the following interpretation. Let $X,Y,Z$ be i.i.d. $\sim\pi$ and consider the indicator of \emph{exactly two equal}
\begin{equation*}
  I_{2} = (X = Y \ne Z) + (X = Z \ne Y) + (Y = Z \ne X).
\end{equation*}
 Then $\expectof{I_2} =  3 \sum_{x=0}^n \pi_x^2(1-\pi_x) = 3 \polof{\pi}$.

\begin{figure} 
\begin{center}
\includegraphics[width=.30\linewidth, viewport = 66 87 467 435]{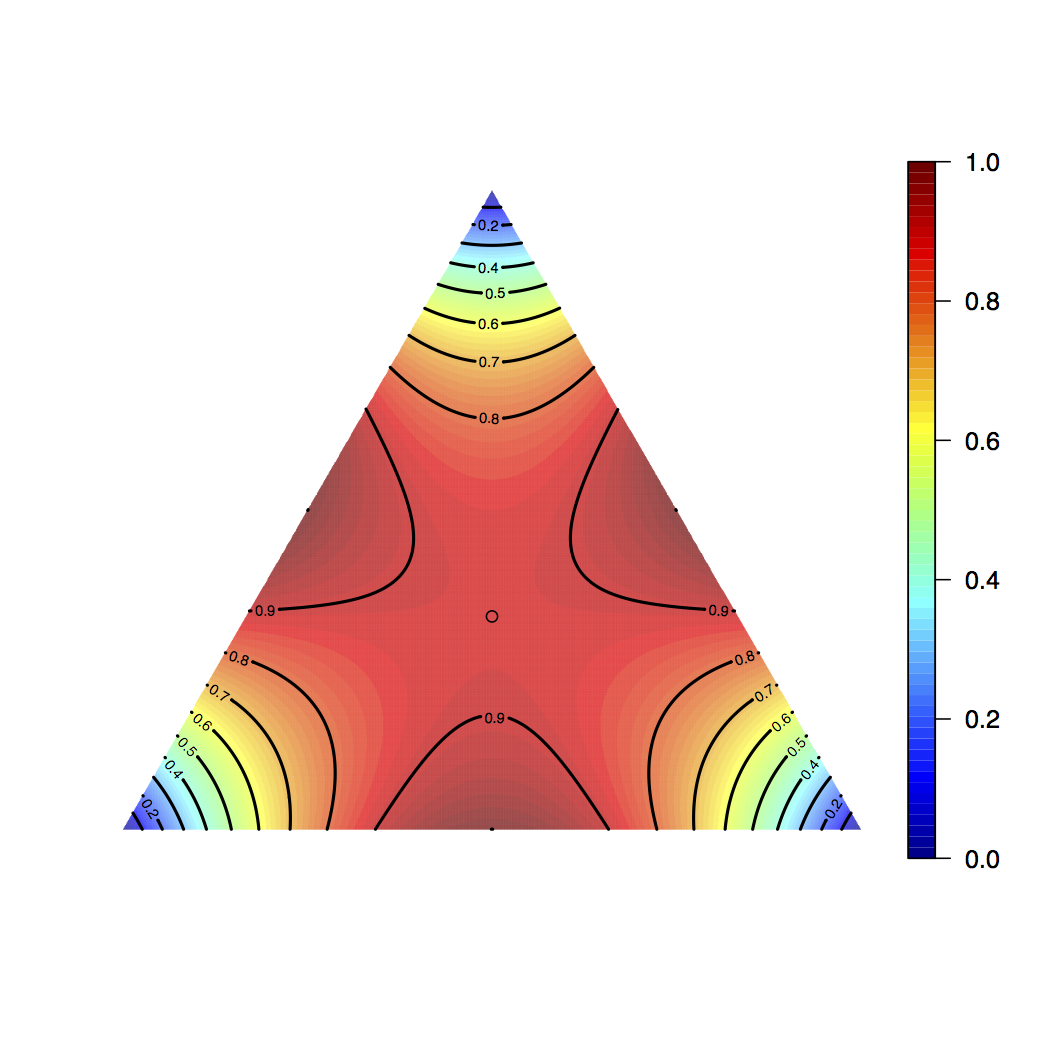}
\caption{Normalised Polarization. The display shows: the probability simplex as an equilateral triangle; the level curves of $4 \times \POL$; the unstable critical point at $\pi = (1/3.1/3,1/3)$ (circle); the minimum points at the vertexes (triangles); the maximum points at $\pi=(0,1/2,1/2), (1/2,0,1/2), (1/2,1/2,0)$ (squares).}
\end{center}
\label{fig:polarization}
\end{figure}

The polarization measure on the classes $\set{0,1,2}$, as shown in  Fig. \ref{fig:polarization}, has an unstable critical point at the uniform distribution, it is zero in the case of concentration in one class, and reaches its maximum 1/4 on distributions on two classes with equal probabilities. Polarization measure was devised to be an index of the distance of a distribution from the three cases of maximal polarization. In Fig. \ref{fig:polarization}  the simplex is represented as an equilateral triangle. In the following we shall use different sets of coordinates to represent the probability simplex, e.g. see Fig. \ref{fig:gradpol-A} (left).

We want to study the dynamics of this index, i.e. to characterise evolutions that maximise or minimise the  index. This study requires tools from Information Geometry (IG) e.g., \citet{amari|nagaoka:2000,gibilisco|pistone:98, pistone|rogantin:99,gibilisco|riccomagno|rogantin|wynn:2010,pistone:2013GSI,malago|pistone:2014Entropy}. However, the following presentation is actually largely self-contained.

The recourse to IG is not dispensable because the ordinary gradient flow of $\POL$, as shown in Fig. \ref{fig:gradpol-B} (left),  does not lead to the extrema of interest on the border of the probability simplex. Consequently, one wants to turn to a different way to compute the gradient, i.e. to the so-called Amari's natural gradient. We use elementary fact of the theory of Dynamical Systems to characterise critical points of the gradient flow and refer to \citet{abraham|marsden|ratiu:1988}.

The basics of IG are discussed in Sec. \ref{sec:natural gradient}. The application of IG to the polarization measure is described in Sec. \ref{sec:pol}.
 The possibility of a generalisation of such an index is shortly discussed in Sec. \ref{sec:POLs}, while the reduction of the problem to the study of an exponential family is presented in Sec. \ref{sec:POLasE}. We suggest a possible application in Sec. \ref{sec:app}. 

Further material, not directly related with the measure of polarization, but suggested by the methodology, is presented in the Appendixes. Differential equations on the probability simplex are well known in applications other then Descriptive Statistics. We briefly discuss the relations between these applications and our one in App. \ref{sec:replicator}. In App. \ref{sec:hessian} some issues related to the second order calculus are briefly discussed. 

\section{Natural gradient}
\label{sec:natural gradient}
We denote by $\Delta_n$ the simplex of the probability function $\pi$ on $0,1,\dots,n$. The interior of the simplex, $\Delta_n^\circ $,  is the set of the strictly positive probability functions,
\begin{equation*}
 \Delta_n^\circ = \setof{\pi \in \reals^{n+1}}{\sum_{x=0}^n \pi_x = 1, \pi_x > 0, x=0,1,\dots,n}.
\end{equation*}

The border of the simplex is the union of all the faces of $\Delta_n$ as a convex set. We recall that  a face of maximal dimension $n-1$ is called facet. A facet is a simplex of dimension $n-1$.

We define $B_\pi$ to  be the vector space of random variables $U$ that are $\pi$-centered, $\expectat \pi U = 0$. In the geometry of $\reals^{n+1}$, $B_\pi$ is the plane through the origin, orthogonal to the vector $\overrightarrow{O\pi}$.

\begin{defn}\
\begin{enumerate}
\item
The \emph{tangent bundle} of the open simplex $\Delta^\circ$ is the set
\begin{equation*}
  T\Delta_n^\circ = \setof{(\pi,U)}{\pi \in \Delta_n^\circ, U \in  B_\pi}.
\end{equation*}
\item
If $I \ni t \mapsto p(t) \in \Delta_n^\circ$ is a one-dimensional statistical model, geometrically a curve, its \emph{score}
\begin{equation*}
  D p(t) = \frac{\dot p(t)}{p(t)} = \derivby t \log p(t)
\end{equation*}
belongs to $B_{p(t)}$ for all $t \in I$. As the score is a centered random variable, hence $I \ni t \mapsto (p(t),D p(t))$ is a curve in the tangent bundle.
\end{enumerate}
\end{defn}

In fact, $U \in B_{\pi}$ is meant to represent a generic velocity vector through $\pi$, see Fig. \ref{fig:tangentbundle}. The score is a representation of the velocity along a curve, because of  a geometric interpretation of C. R. Rao's classical computation:
\begin{multline}\label{eq:rao}
\frac d {dt} E_t[U]= \frac d {dt} \sum_x U(x)p(x;t)= \sum_x U(x)  \frac d {dt} p(x;t)=\\ \sum_x U(x)  \frac d {dt} \log\left( p(x;t)\right)p(x;t)=
\sum_x \left(U(x)-E_t[U]\right)  \frac d {dt} \log\left( p(x;t)\right)p(x;t)=\\
E_t\left[\left(U-E_t[U]\right) \frac d {dt} \log\left( p(t)\right) \right]=
 \scalarat {p(t)} {U-E_t[U]}{D p(t)}
\end{multline}
We observe that the scalar product above is the scalar product on $B_{p(t)}$.

A curve on the simplex is a parametric model. The  probability $\pi$  is represented by a vector from $O$ to the point whose coordinates are $\left(\pi_i\right)_{i=0\dots,n }$. In Fig. \ref{fig:tangentbundle}, the   velocity vectors are represented by arrows; they are orthogonal to the vectors $\overrightarrow{O\pi}$.

\begin{figure}
\label{fig:tangentbundle}
\centering \includegraphics[width=0.30\linewidth, viewport = 67 61 238 233]{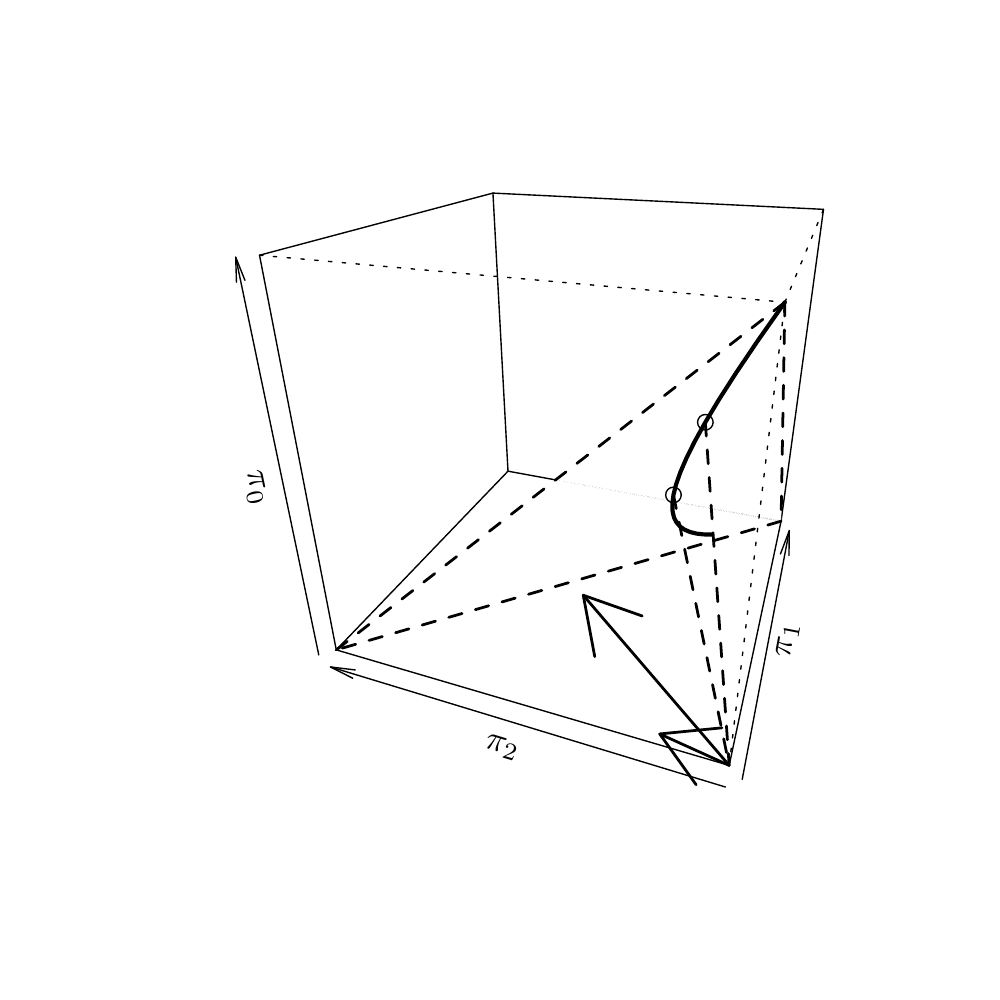}
\caption{The simplex (solid triangle) is view from below. The curve on the simplex is a parametric model. The  probabilities $\pi$  are represented by vectors from $O$ to the point whose coordinates are $\pi$. The   velocity vectors are represented by arrows; they are orthogonal to the vectors from $O$ to $\pi$.}\end{figure}

In our context, a \emph{vector field} $F \colon \Delta_n^\circ \to \reals^{n+1}$ is a mapping such that $F(\pi) \in B_\pi$, i.e. such that the couple $(\pi, F(\pi))$ belongs to the tangent bundle. $(\pi,F(\pi)) \in T\Delta_n^\circ$ for all $\pi$.
 Because of our geometrical construction, here we prefer to call $F$ a vector field, but we want to stress its statistical meaning of centered function of the distribution.

A \emph{differential equation} is an equation of the form $D p(t) = F(p(t))$.

Given a real function $\phi \colon \Delta_n^\circ \to \reals$, its \emph{gradient} is the vector field $\nabla \phi$ such that for all curves $p(\cdot)$ we have
\begin{equation}
\label{eq:gradient}
\frac{d}{dt} \phi(p(t)) = \scalarat {p(t)} {\nabla\phi(p(t))} {D p(t)}, \quad \scalarat {\pi} u v = \expectat {\pi} {uv}, u,v \in B_\pi.
\end{equation}

The Rao's computation in Eq. \eqref{eq:rao} is the prototypical gradient computation.

The \emph{gradient flow equation} is the differential equation
\begin{equation*}
  D p(t) =  \nabla \phi(p(t)).
\end{equation*}

Along a solution of the gradient flow equation the value of $\phi$ is increasing because $d \phi(p(t)) / dt = \scalarat {p(t)} {\nabla\phi(p(t))} {\nabla \phi(p(t)} \ge 0$. Actually the solution of the gradient flow equation is the curve of \emph{steepest ascent}.

Computations are usually performed in a \emph{parametrization}
\begin{equation*}
  \pi \colon \Theta \ni \bm \theta \mapsto \pi(\bm\theta) \in \Delta_n^\circ,
\end{equation*}
$\Theta$ being an open set in $\reals^n$. The $j$-th coordinate curve is obtained by fixing the other $n-1$ components and moving $\theta_j$ only. The scores of the $j$-th coordinate curves are the random variables
\begin{equation*}
  D_j \pi(\bm \theta) = \frac{\partial}{\partial \theta_j} \log \pi(\bm \theta), \quad j = 1,\dots,n.
\end{equation*}

The sequence $(D_j \pi(\bm \theta) \colon j = 1,\dots,n)$ is a vector basis of the tangent space $B_{\pi(\bm\theta)}$. The representation of the scalar product in such a basis is
\begin{equation*}
  \scalarat{\pi(\bm\theta)}{\sum_{i=1}^n \alpha_i D_i \pi(\bm \theta)}{\sum_{j=1}^n \beta_j D_j \pi(\bm \theta)} = \sum_{i,j=1}^n \alpha_i\beta_j I_{ij}(\bm\theta),
\end{equation*}
where the matrix $I(\bm\theta) = \left[\scalarat{\pi(\bm\theta)}{D_i \pi(\bm \theta)}{D_j \pi(\bm \theta)}\right]_{i,i=1}^n$ is the \emph{Fisher information} matrix.

If $\bm\theta \mapsto \widetilde \phi(\bm\theta)$ is the expression in the parameters of a function $\phi \colon \Delta_n^\circ \to \mathbb R$, that is $\widetilde \phi(\bm\theta) = \phi(\pi(\bm\theta))$,  and $t \mapsto \bm\theta(t)$ is the expression in the parameters of a generic curve $p \colon I \to \Delta_n^\circ$, then the components of the gradient in \eqref{eq:gradient} are expressed in terms of the ordinary gradient by observing that
\begin{equation*}
  \frac{d}{dt} \phi(p(t)) = \frac{d}{dt} \widetilde\phi(\bm\theta(t)) = \sum_{j=1}^n \frac{\partial}{\partial\theta_j} \widetilde\phi(\bm\theta(t)) \dot\theta_j(t).
\end{equation*}

As $D p(t)=\sum_j D_j \pi(\bm\theta(t)) \dot \theta_j(t)$, we obtain from \eqref{eq:gradient}
\begin{equation}\label{eq:1}
  \frac{d}{dt} \phi(p(t)) = \scalarat {p(t)} {\nabla\phi(p(t))} {D p(t)} = \sum_j \scalarat {p(t)} {\nabla\phi(p(t))} {D_j \pi(\bm\theta(t)) \dot \theta_j(t)}.
\end{equation}

\begin{defn}[\cite{amari:1998natural}]
The \emph{natural gradient} is a vector $\widetilde \nabla \widetilde\phi(\bm\theta)$ whose components are the coordinates of the gradient $\nabla \widetilde\phi(\pi(\bm \theta)) \in B_{\pi(\bm\theta)}$ in its $\pi$-basis, that is
\begin{equation*}
 \nabla \phi(\pi(\bm \theta)) = \sum_{j=1}^n (\widetilde \nabla \widetilde\phi(\bm\theta))_j D_j \pi(\bm\theta).
\end{equation*}

By substitution of the expression in \eqref{eq:1} we obtain
\begin{equation}\label{def:nat-grad}
  \widetilde \nabla \widetilde\phi(\bm\theta) = \nabla \widetilde\phi(\bm\theta) I^{-1}(\bm\theta).
\end{equation}
\end{defn}

Fig. \ref{fig:albero} is an illustration for the function $\phi(\pi) = \POL(\pi)$.

\begin{figure}
\centering \includegraphics[width=0.70\textwidth, viewport = 20 125 822 476]{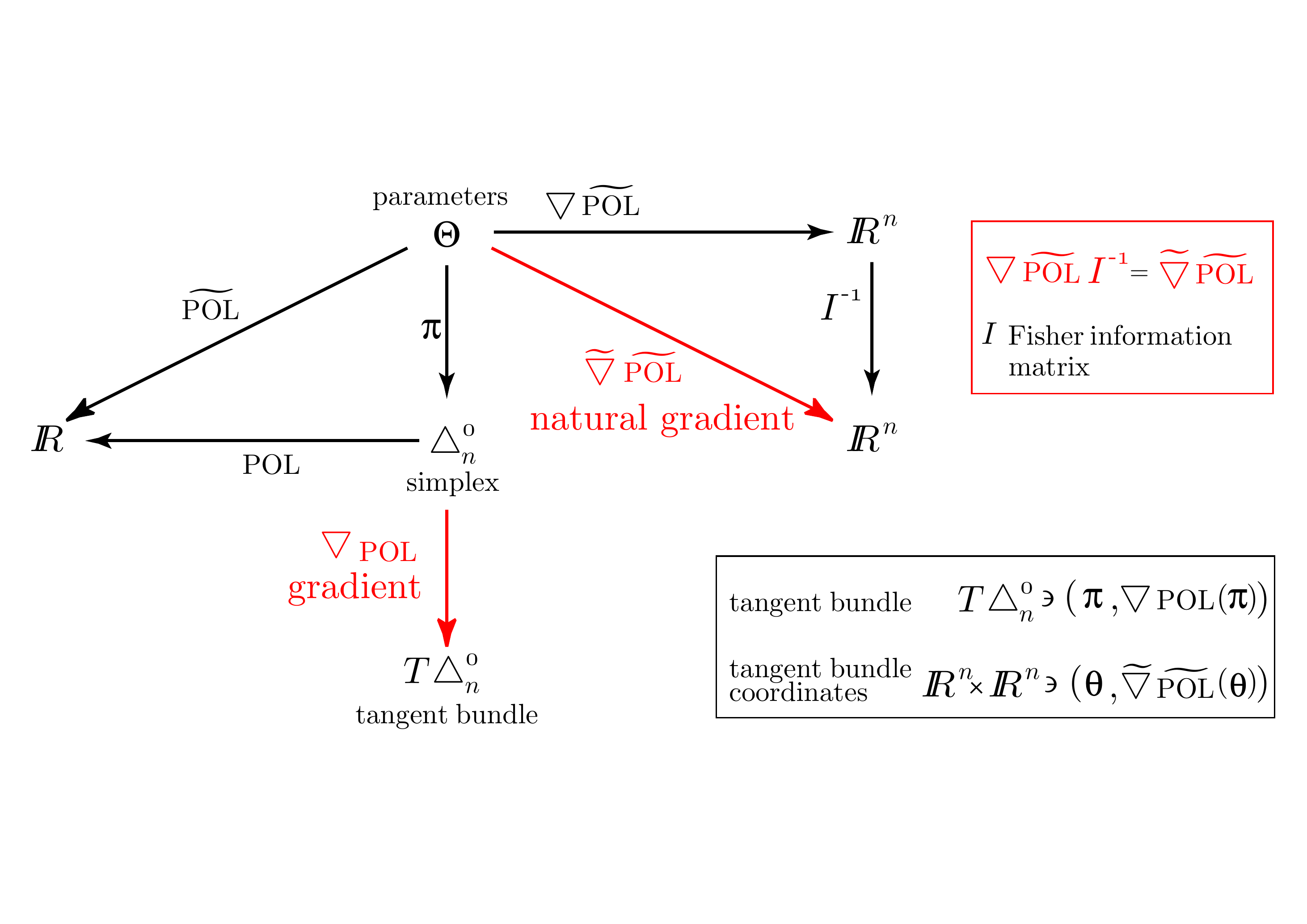}
\caption{Diagram of the action of the natural gradient in a given parametrization $\pi \colon \Theta \to \Delta_n^\circ$.\label{fig:albero}}
\end{figure}

The \emph{common parametrization} of the (flat) simplex $\Delta_n^\circ$ is the projection on the solid simplex $\Gamma_n = \setof{\bm\eta \in \reals^n}{0<\eta_j,  \sum_{j=1}^n \eta_j< 1}$, that is
\begin{equation*}
  \pi \colon \Gamma_n \ni \bm\eta \mapsto \left(1 - \sum_{j=1}^n \eta_j,\eta_1,\dots,\eta_n\right) \in \Delta_n^\circ,
\end{equation*}
in which case $\partial_j \pi(\bm\eta)$, $j=1,\dots,n$, is the random variable with values $-1$ at $x=0$, 1 at $x=j$, 0 otherwise, hence $\partial_j \pi(\bm \eta) = \left((X=j)-(X=0)\right)$ and
\begin{equation*}
  D_j \pi(\bm \eta) = \left((X=j)-(X=0)\right)/\pi(\bm\eta).
\end{equation*}

The element $(j,h)$ of the Fisher information matrix is
\begin{multline*}
  I_{jh}(\bm\eta)= \expectat {\pi(\bm\eta)}{\frac{(X=j)-(X=0)}{\pi(X;\bm\eta)}\frac{(X=h)-(X=0)}{\pi(X;\bm\eta)}} = \\  \sum_x \pi(x,\bm\eta)^{-1}\left((x=j)(j=h)+(x = 0)\right) = \eta_j^{-1}(j=h) + \left(1 - \sum_k \eta_k\right)^{-1}
\end{multline*}
hence
\begin{equation*}
  I(\bm\eta) = \diagof{\bm\eta}^{-1}  + \left(1 - \sum_{j=1}^n \eta_j\right)^{-1}[1]_{i,j=1}^n.
\end{equation*}

As an example we consider $n=3$.  The Fisher information matrix, its inverse and the determinant of the inverse are, respectively,
\begin{multline*}
 I(\eta_1,\eta_2,\eta_3)  = \\ (1 - \eta_1 - \eta_2-\eta_3)^{-1}
  \begin{bmatrix}
\eta_1^{-1}(1 - \eta_2-\eta_3) & 1 &1 \\ 1 & \eta_2^{-1}(1 - \eta_1-\eta_3) & 1 \\  1 &1& \eta_3^{-1}(1 - \eta_1-\eta_2)
  \end{bmatrix},
\end{multline*}
\begin{equation*}
I(\eta_1,\eta_2,\eta_3)^{-1}  =
  \begin{bmatrix}
(1 - \eta_{1}) \eta_{1} & -\eta_{1} \eta_{2} & -\eta_{1} \eta_{3}
\\
-\eta_{1} \eta_{2} & (1 - \eta_{2}) \eta_{2} & -\eta_{2} \eta_{3}
\\
-\eta_{1} \eta_{3} &  -\eta_{2} \eta_{3} & (1 - \eta_{3}) \eta_{3}
  \end{bmatrix}, 
\end{equation*}
\begin{equation*}
\det\left(I(\eta_1,\eta_2,\eta_3)^{-1} \right) = (1-\eta_1- \eta_2- \eta_3)\eta_1 \eta_2 \eta_3.
\end{equation*}
Note that the computation of the inverse of $I(\bm\eta)$ is an application of the Sherman-Morrison formula and the computation of the determinant of $I(\bm\eta)^{-1}$ is an application of the matrix determinant lemma.

For general $n$, we have the following Proposition, whose interest stems from the definition of natural gradient, see Eq. \eqref{def:nat-grad}.

\begin{prop}\label{prop:I-1} \
\begin{enumerate}
\item The inverse of the Fisher information matrix is
\begin{equation*}
I(\bm\eta)^{-1}=\diagof{\bm\eta}  - \bm\eta \bm\eta^t
\end{equation*}
\item In particular,  $I(\bm\eta)^{-1}$ is zero  on the vertexes of the simplex, only.
\item The determinant of the Fisher information matrix is
\begin{equation*}
\det\left(I(\bm\eta)^{-1}\right)=\left(1- \sum_{i=1}^n  \eta_i\right)  \prod_{i=1}^n \eta_i .
\end{equation*}
\item The determinant of $I(\bm\eta)^{-1}$  is zero on the borders of the simplex, only.
\item On the interior of each  facet, the rank of  $I(\bm\eta)^{-1}$ is $n-1$ and the $n-1$ liner independent column vectors  generate the subspace parallel to the facet itself.
\end{enumerate}
\end{prop}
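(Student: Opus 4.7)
The plan is to identify $I(\bm\eta)$ as the rank-one update
\[
I(\bm\eta) = A + \alpha \mathbf{1}\mathbf{1}^t, \qquad A = \diagof{\bm\eta}^{-1}, \qquad \alpha = (1-s)^{-1},
\]
where $s = \sum_{j=1}^n \eta_j$, which turns both (1) and (3) into applications of standard rank-one formulas. Using $A^{-1}=\diagof{\bm\eta}$ and $A^{-1}\mathbf{1}=\bm\eta$, the Sherman-Morrison formula gives
\[
I(\bm\eta)^{-1} = \diagof{\bm\eta} - \frac{(1-s)^{-1}\bm\eta\bm\eta^t}{1+s/(1-s)} = \diagof{\bm\eta}-\bm\eta\bm\eta^t,
\]
proving (1); one can also verify the identity by direct multiplication. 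For (3), the matrix determinant lemma applied to the same decomposition yields
\[
\det I(\bm\eta) = \det A \bigl(1+\alpha\mathbf{1}^t A^{-1}\mathbf{1}\bigr) = \Bigl(\prod_j \eta_j\Bigr)^{-1}\Bigl(1+\frac{s}{1-s}\Bigr) = \Bigl[(1-s)\prod_j \eta_j\Bigr]^{-1},
\]
whose reciprocal is the stated formula for $\det I(\bm\eta)^{-1}$.

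Items (2) and (4) then follow easily. For (2), $\diagof{\bm\eta}-\bm\eta\bm\eta^t = 0$ forces $\eta_i - \eta_i^2 = 0$ on the diagonal and $\eta_i\eta_j = 0$ off of it, which on the closed simplex characterises precisely the $n+1$ vertexes (either all $\eta_j = 0$, or exactly one $\eta_k = 1$ with the rest zero). For (4), the factored determinant $(1-s)\prod_j \eta_j$ vanishes iff one of the $n+1$ quantities $\eta_1,\dots,\eta_n,\,1-s$ equals zero, which is exactly the union of the $n+1$ facets of $\Delta_n$.

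The substantive item is (5), which splits according to which facet is considered. On the interior of a facet $\eta_k = 0$ with $k\in\{1,\dots,n\}$, direct inspection of $\diagof{\bm\eta}-\bm\eta\bm\eta^t$ shows that the $k$-th row and column vanish, while the complementary $(n-1)\times(n-1)$ block is a matrix of the same form built from $(\eta_j)_{j\ne k}$; by (3) applied in dimension $n-1$ its determinant equals $(1-\sum_{j\ne k}\eta_j)\prod_{j\ne k}\eta_j>0$, so the rank is exactly $n-1$ and the $n-1$ non-zero columns span $\{v\in\reals^n : v_k = 0\}$, which is the subspace parallel to that facet. On the remaining facet $s=1$, observing $(\diagof{\bm\eta}-\bm\eta\bm\eta^t)\mathbf{1} = \bm\eta-\bm\eta\,s = 0$ places $\mathbf{1}$ in the kernel; conversely, if $v$ is in the kernel then $\eta_j v_j = (\bm\eta^t v)\eta_j$ for every $j$, and strict positivity of every $\eta_j$ on the interior of this facet forces $v_j = \bm\eta^t v$ independently of $j$, so $v\in \reals\mathbf{1}$ and the rank is $n-1$. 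Each column $\eta_j(e_j - \bm\eta)$ has coordinate-sum $\eta_j(1-s) = 0$, so the column space sits inside $\{v : \sum_i v_i = 0\}$, and by the rank count it fills this $(n-1)$-dimensional parallel subspace.

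The main obstacle is (5): in the chosen parametrization the two kinds of facets ($\eta_k = 0$ versus $\sum_j\eta_j = 1$) have parallel subspaces of very different appearance (a coordinate hyperplane versus a traceless hyperplane), and both have to be handled separately, first identifying the kernel and then checking that the column space is exactly the full $(n-1)$-dimensional parallel subspace.
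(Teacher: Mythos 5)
Your proposal is correct and follows essentially the same route as the paper: the paper likewise obtains (1) via the Sherman--Morrison decomposition (verified by direct multiplication), (3) via the matrix determinant lemma, (2) and (4) by the same case analyses, and (5) by deleting the vanishing row and column on the facets $\eta_k=0$ and reducing to the $(n-1)$-dimensional case. The only cosmetic difference is on the facet $\sum_j\eta_j=1$, where you identify the kernel as $\reals\mathbf 1$ directly while the paper checks that an $(n-1)\times(n-1)$ cofactor determinant is nonzero; both yield the same rank count and the same parallel subspace.
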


\begin{proof} \
\begin{enumerate}
\item By direct computation, $I(\bm\eta) I(\bm\eta)^{-1}$ is the identity matrix.

\item The diagonal elements of  $I(\bm\eta)^{-1}$ are zero if $\eta_j=1$ or $\eta_j=0$, for $j=1,\dots,n$. If, for a given $j$,  $\eta_j=1$, then the elements of $I(\bm\eta)^{-1}$ are zero if $\eta_h=0$, $h \ne j$. The remaining case corresponds to  $\eta_j=0$  for all $j$. Then $I(\bm\eta)^{-1}=0$  on all the vertexes of the simplex.

\item It follows from Matrix Determinant Lemma.

\item The determinant factors in terms corresponding to the equations of the facets.
\item
Given $i$, the conditions  $\eta_i=0$ and $\eta_j\ne 0,1$ for all $j\ne i$, define the interior of the facet orthogonal to standard base vector $e_i$. In this case the $i$-th row and the $i$-th column of $ I(\bm\eta)^{-1}$  are zero and  the complement matrix corresponds to the inverse of a Fisher information matrix in dimension $n-1$ with non zero determinant. It follows that the subspace generated by the columns has dimension $n-1$ and coincides with the space orthogonal to $\eta_i$.
Consider  the facet defined by $\left(1- \sum_{i=1}^n \eta_i\right)=0$, $\eta_i\ne 0,1$ for all $i$.  For a given $j$,  the matrix without the $j$-th row and the $j$-th column has determinant
$\left(1-\sum_{i=1,i \ne j }^n \eta_i \right)\prod_{i=1, i \ne j}^n \eta_i$. On the considered facet this determinant is different to zero and $I(\bm\eta)^{-1}$  has rank $n-1$ and their columns are orthogonal to the constant vector.\qed

\end{enumerate}

\end{proof}

An other parametrization is the \emph{exponential parametrization} based on the exponential family with sufficient statistics $X_j = (X=j)$, $j=1,\dots,n$,
\begin{equation*}
  \pi \colon \mathbb R^n \ni \bm\theta \mapsto \expof{\sum_{j=1}^n \theta_j X_j - \psi(\bm\theta)}\frac1{n+1}
\end{equation*}
where
\begin{equation*}
\psi(\bm\theta) = \logof{1 + \sum_j \euler^{\theta_j}} - \logof{n+1}.
\end{equation*}
Some of the properties discussed in Prop. \ref{prop:I-1} should actually be discussed under the exponential parametrization, see e.g. \cite{malago|pistone:2014Entropy}, but we do not do that here. We will discuss the exponential parametrization below in Sec. \ref{sec:POLasE} to show that polarization can be seen as an expectation with respect to an exponential family.

\section{The gradient flow of $\POL$}
\label{sec:pol}

We apply now the general theory of the natural gradient to the study of the dynamics of the polarization measure.  Our goal is to find the lines of the steepest ascent of the function $\POL$.

In the common parametrization we have
\begin{equation*}
  \widetilde\POL(\bm\eta) = \left(1-\sum_{j=1}^n \eta_j\right)^2\left(\sum_{j=1}^n \eta_j\right)+\sum_{j=1}^n \eta_j^2\left(1-\eta_j\right)
\end{equation*}
and, for $n=2$,
\begin{multline*}
  \widetilde\POL(\bm\eta) = (1-\eta_1-\eta_2)^2(\eta_1+\eta_2) +\eta_1^2(1-\eta_1)+\eta_2^2(1-\eta_2)=\\
 3(\eta_1^2\eta_2+\eta_1\eta_2^2) -(\eta_1^2+\eta_2^2) -4\eta_1\eta_2+(\eta_1+\eta_2)
\end{multline*}
with gradient
\begin{equation*}
  \nabla\widetilde\POL(\bm\eta) =  \left(6\eta_{1} \eta_{2} + 3 \eta_{2}^{2} - 2 \eta_{1}- 4\eta_{2} + 1 ,
 6\eta_{2} \eta_{1} +  3 \eta_{1}^{2}- 2 \eta_{2}- 4\eta_{1}+1 \right).
\end{equation*}
The inverse of the Fisher information matrix is
\begin{equation*}
I^{-1}(\eta_1,\eta_2)  =
  \begin{bmatrix}
(1 - \eta_{1}) \eta_{1} & -\eta_{1} \eta_{2}
\\
-\eta_{1} \eta_{2} & (1 - \eta_{2}) \eta_{2}
  \end{bmatrix}.
\end{equation*}
The natural gradient is
\begin{multline}\label{eq:grapol2}
  \natnabla \widetilde\POL(\eta_1,\eta_2) = \nabla \widetilde \POL(\eta_1,\eta_2) I^{-1}(\eta_1,\eta_2) = \\
   \left(-9 \eta_{1}^{3}
\eta_{2} - 9 \eta_{1}^{2} \eta_{2}^{2} + 2 \eta_{1}^{3} +
14 \eta_{1}^{2} \eta_{2} + 5 \eta_{1} \eta_{2}^{2} - 3
\eta_{1}^{2} - 5 \eta_{1} \eta_{2} + \eta_{1},\right.\\ \left. -9 \eta_{1}^{2}
\eta_{2}^{2} - 9 \eta_{1} \eta_{2}^{3} + 5 \eta_{1}^{2}
\eta_{2} + 14 \eta_{1} \eta_{2}^{2} + 2 \eta_{2}^{3} - 5
\, \eta_{1} \eta_{2} - 3 \eta_{2}^{2} + \eta_{2} \right) .
\end{multline}
See in Fig.s \ref{fig:gradpol-A} and \ref{fig:gradpol-B} the gradient fields.

\begin{figure}
\begin{center}
\begin{tabular}{cc} 
\includegraphics*[width=.30\textwidth, viewport = 102 97 431 425]{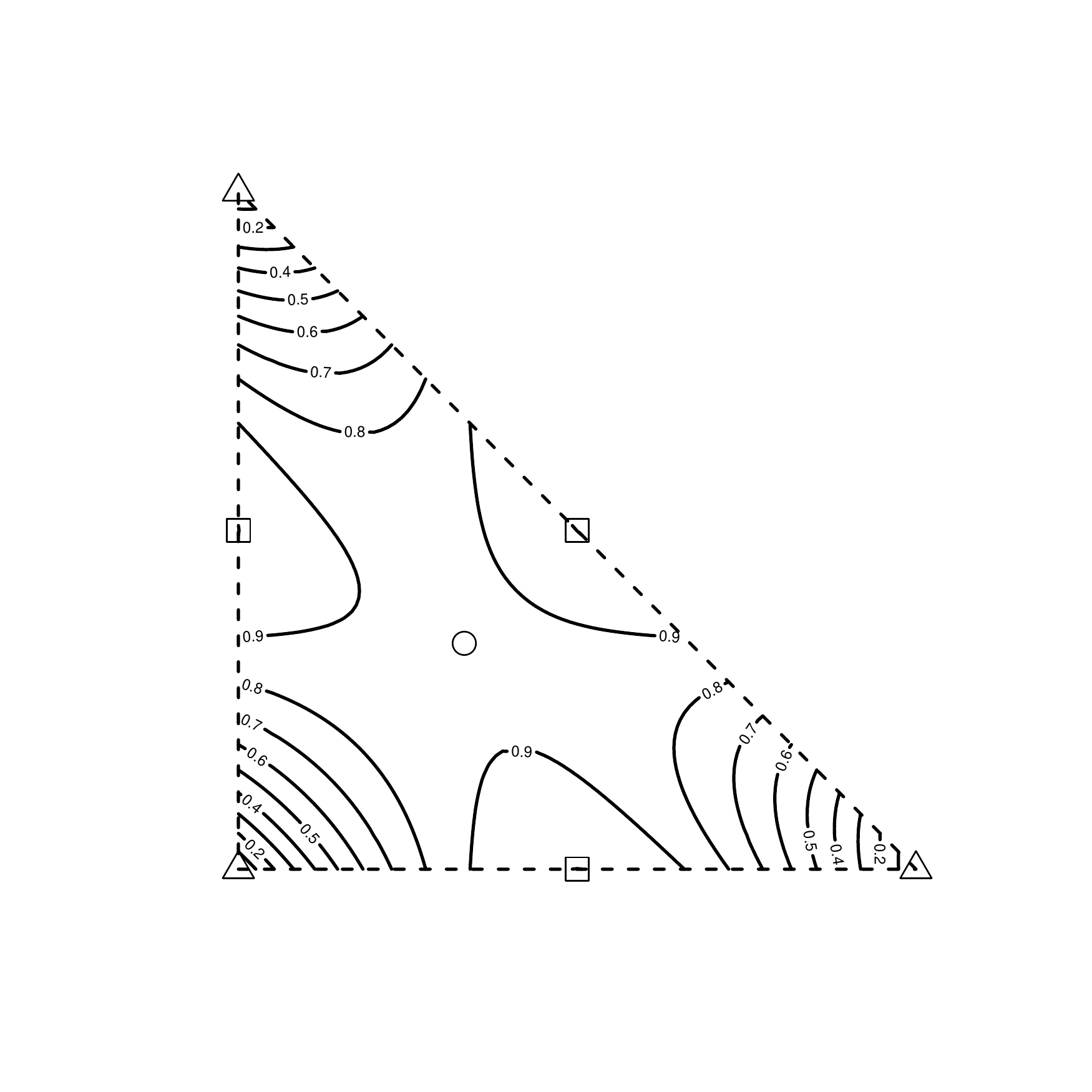} &
\includegraphics*[width=.30\textwidth, viewport = 94 86 439 432]{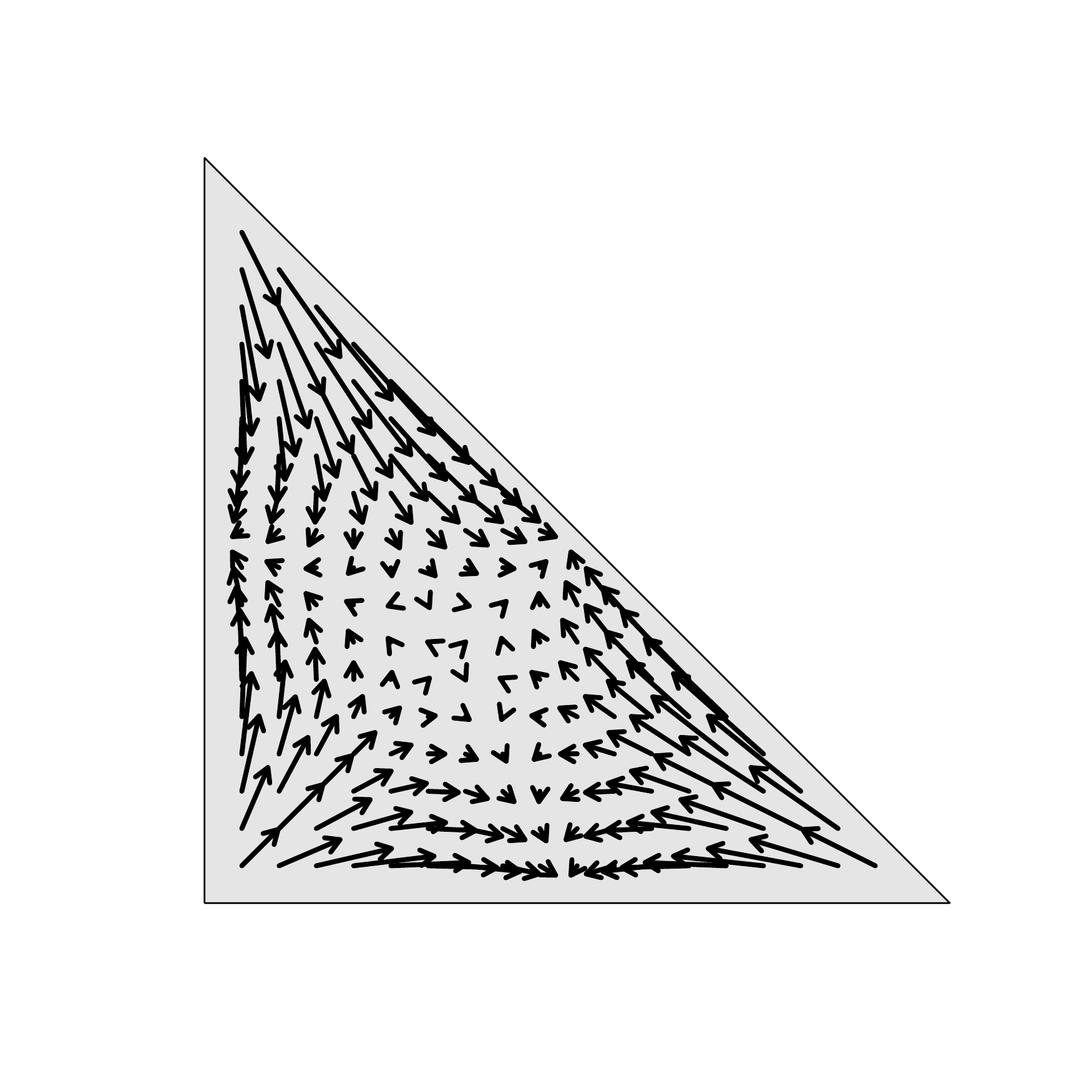}
\end{tabular}
\end{center}
\caption{Level curves of the Polarization in the common  parametrization (left) and natural gradient field (right).\label{fig:gradpol-A}}
\end{figure}

Some properties of the natural gradient field depend on the inverse of Fisher information matrix only, other are specific properties of the function $\POL$. We note that the vector field in \eqref{eq:grapol2} is actually defined and continuous for all $\bm \eta \in \reals^2$ and coincides with the natural gradient in the interior $\Gamma_n$ of the solid simplex. By abuse of language, we call the extended object with the same name of the probabilistic object.

The inverse of Fisher information matrix is zero at the points $(0,0)$, $(0,1)$, $(1,0)$, see Prop. \ref{prop:I-1}. These are among of the fixed points of the gradient flow equation.
The determinant of the inverse of Fisher information matrix is $\det I(\bm\eta) ^{-1} = (1-\eta_1-\eta_2) \eta_1\eta_2$.
As proved in  Prop. \ref{prop:I-1} for the general case, the determinant is zero on the borders of simplex. Here the probabilistic model is not defined, but the continuous extension of the gradient flow holds. On the facets of the simplex the vector field is parallel to the facets itself.
On the facets the inverse of Fisher information matrix is one dimensional:  if $\eta_1=0$ then $I^{-1}$ corresponds to $(0,1)^t$, if $\eta_2=0$ then $I^{-1}$ corresponds to $(1,0)^t$,  and if $1-\eta_1-\eta_2=0$ then $I^{-1}$ corresponds to $(1,-1)^t$.

\begin{figure}
\begin{center}
\begin{tabular}{cc} 
\includegraphics*[width=.30\textwidth, viewport = 94 86 439 432]{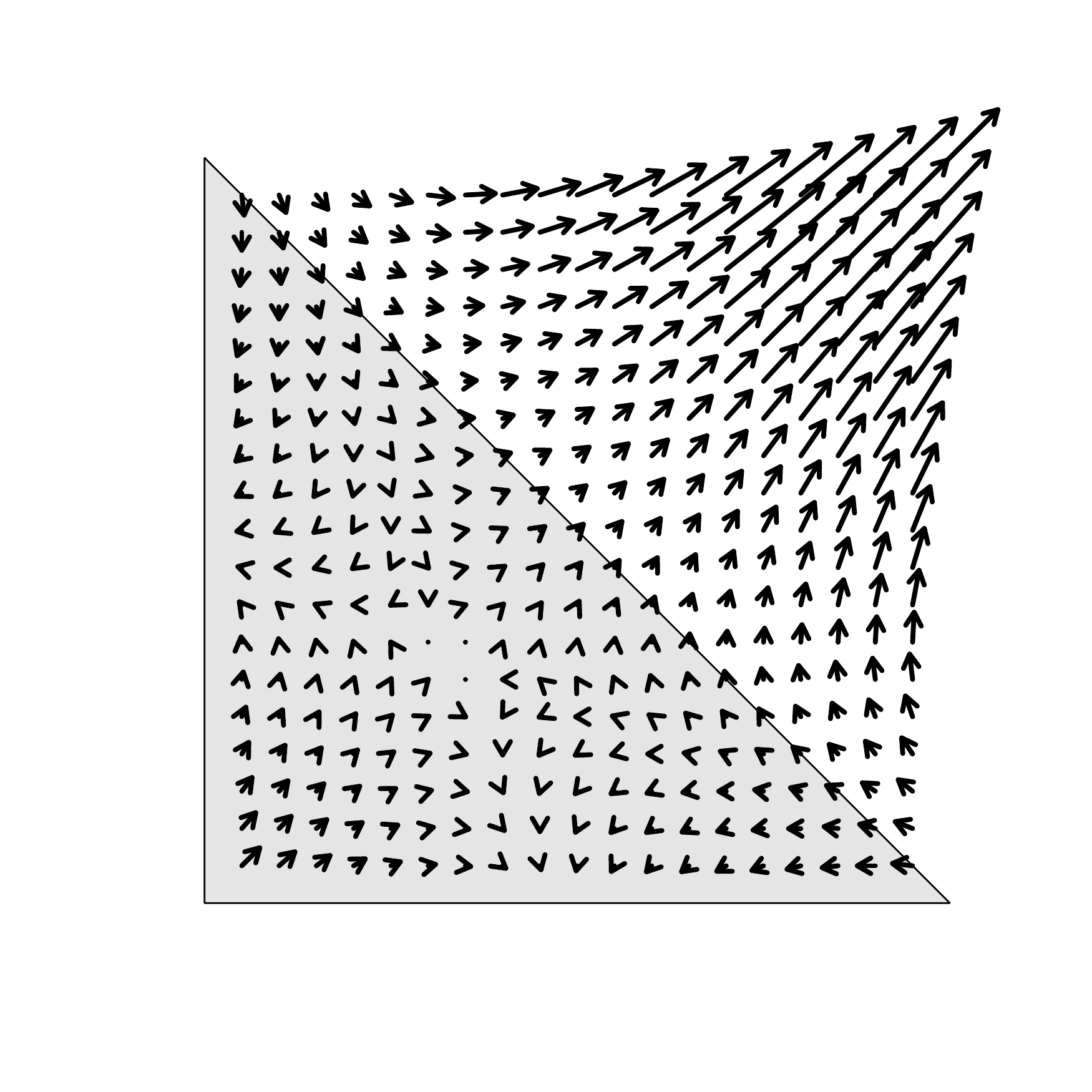} & 
\includegraphics*[width=.30\textwidth, viewport = 94 86 439 432]{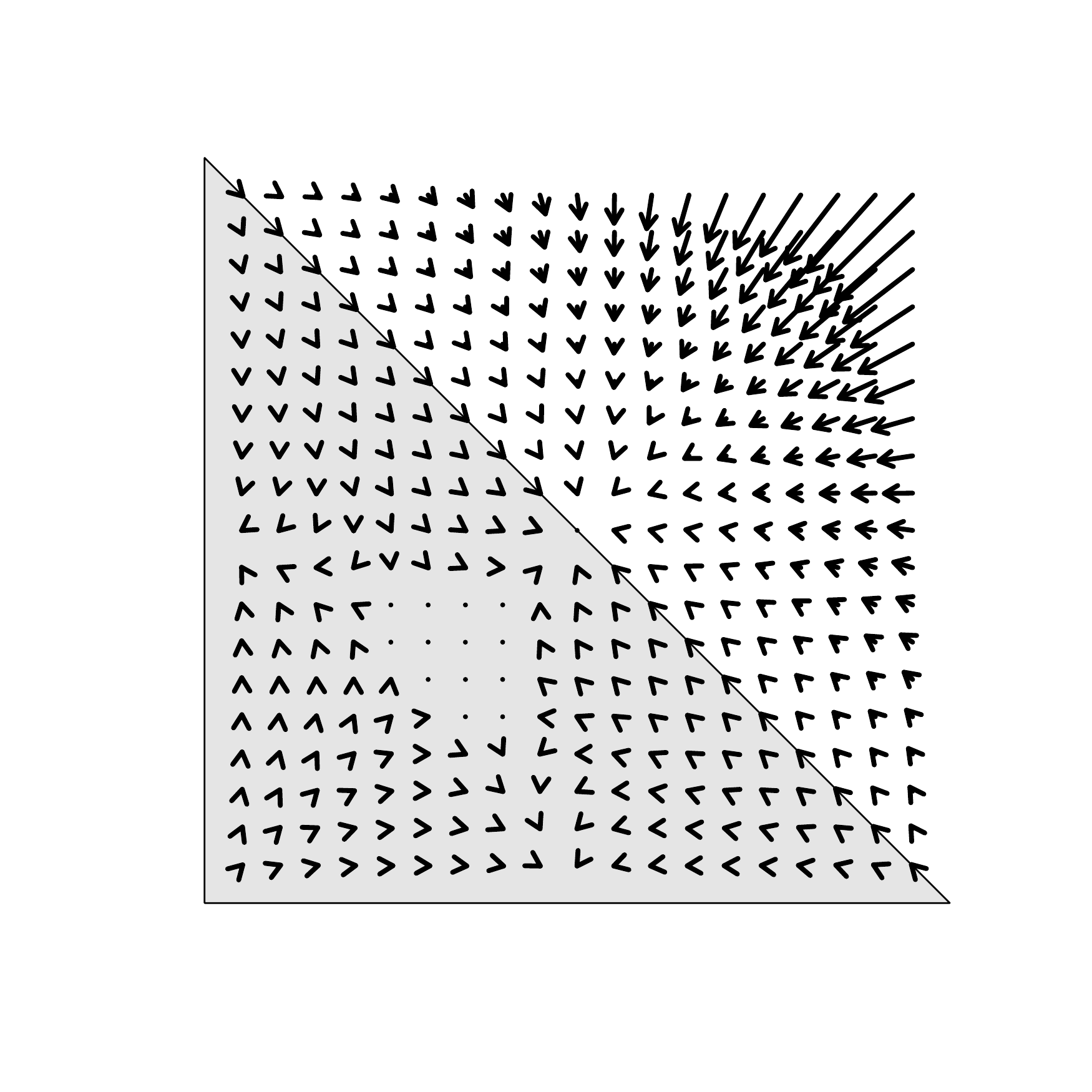}
\end{tabular}
\end{center}
\caption{The gradient without the correction by $I^{-1}$ gives the wrong directions (left), while the natural gradient applies correctly (right). Both fields are extended outside the probability simplex. The length of the arrows is relative to each display and cannot be compared across displays.\label{fig:gradpol-B}}
\end{figure}

To study the flow in the fixed points we consider the sign of the eigenvalues of the Jacobian of the natural gradient, calculated in the fixed points, see \citet{arnold:2006ODE}.

The Jacobian of $\natnabla \widetilde\POL(\bm\eta)$ is
$J = \left[ \frac{\partial \natnabla \widetilde\POL(\bm\eta)}{\partial \eta_1} \quad  \frac{\partial \natnabla \widetilde\POL(\bm\eta)}{\partial \eta_2}\right]$ where
\begin{equation*}
\frac{\partial \natnabla \widetilde\POL(\bm\eta)}{\partial \eta_1}=
\begin{bmatrix}
-27 \eta_1^2\eta_2 -18\eta_1\eta_2^2+6\eta_1^2+28\eta_1\eta_2+5\eta_2^2-6\eta_1-5\eta_2+1\\ -9\eta_2^3-18\eta_2^2\eta_1+14\eta_2^2+10\eta_2\eta_1-5\eta_2
\end{bmatrix}
\end{equation*}
\begin{equation*}
 \frac{\partial \natnabla \widetilde\POL(\bm\eta)}{\partial \eta_2}=
\begin{bmatrix}
-9\eta_1^3-18\eta_1^2\eta_2+14\eta_1^2+10\eta_1\eta_2-5\eta_1\\
-27 \eta_2^2\eta_1 -18\eta_2\eta_1^2+6\eta_2^2+28\eta_2\eta_1+5\eta_1^2-6\eta_2-5\eta_1+1
\end{bmatrix}
\end{equation*}

The Jacobian calculated in the vertexes are
\begin{equation*}
J(0,0)=J(0,1)=J(1,0)=\begin{bmatrix} 1 & 0 \\ 0 & 1\end{bmatrix}
\end{equation*}
and the two eigenvalues of the three Jacobian are both positive. Then, the fixed points on the vertexes repel flow locally.

Moreover, the natural gradient is zero on the midpoints of the borders, $\left(0,  \frac 1 2\right)$, $\left( \frac 1 2 ,0\right)$ and $\left( \frac 1 2,  \frac 1 2\right)$.
The Jacobian calculated in the midpoints are
\begin{equation*}
J\left(0,\frac 1 2\right)=-\frac 1 8\begin{bmatrix}  2  & 0 \\ 1 & 4\end{bmatrix} \quad
J\left(\frac 1 2,0\right)=-\frac 1 8\begin{bmatrix} 4 & 1 \\ 0 & 2\end{bmatrix} \quad
J\left(\frac 1 2,\frac 1 2\right)=-\frac 1 8\begin{bmatrix} 3  & -1 \\  -1  &  3 \end{bmatrix}
\end{equation*}
and the two eigenvalues of the three Jacobian are both negative. Then, these fixed points attract  flow locally.

In standard cases the Jacobian of a gradient is the Hessian matrix. This is not true anymore in IG where we compute the Jacobian of the natural gradient. Actually, in IG there are various notions of Hessian, each one based on a different connection on the tangent bundle. 

\section{Generalisation of the polarization measure}
\label{sec:POLs}
We consider now an inverse problem in the simple case $n=2$. We want a third degree symmetric polynomial which could be used as polarization measure. Computations were performed using Sage \cite{sage:2014}.

A generic polynomial is
\begin{multline*}
f(\bm \pi) = a (\pi_{0}^{3} +\pi_{1}^{3}+ \pi_{2}^{3})+ b (\pi_{0}^{2} \pi_{1} +  \pi_{0} \pi_{1}^{2}
 +  \pi_{0}^{2} \pi_{2} + \pi_{0} \pi_{2}^{2} +\pi_{1}^{2} \pi_{2} +  \pi_{1}\pi_{2}^{2})+\\
  c \pi_{0} \pi_{1} \pi_{2} + d (\pi_{0}^{2} +\pi_{1}^{2} +\pi_{2}^{2})
 + e (\pi_{0} \pi_{1} + \pi_{0} \pi_{2} +  \pi_{1} \pi_{2} )
,
\end{multline*}
and its expression in the parameters is
\begin{multline*}
  \tilde f(\bm \eta) = (-3 a + 3 b -  c) \eta_{1}^{2} \eta_{2} + (-3 a + 3 b -  c) \eta_{1}
\eta_{2}^{2} + (3 a -  b + 2 d -  e) \eta_{1}^{2} + \\ (6 a - 4 b + c + 2 d
-  e) \eta_{1} \eta_{2} + (3 a -  b + 2 d -  e) \eta_{2}^{2} +\\ (-3 a + b
- 2 d + e) \eta_{1} + (-3 a + b - 2 d + e) \eta_{2} + a + d.
\end{multline*}

The components of the natural gradient are
\begin{align*}
  (\natnabla \tilde f)_1(\bm \eta) = &(9 a - 9 b + 3 c) \eta_{1}^{3} \eta_{2} +
  (9 a - 9 b + 3 c) \eta_{1}^{2}\eta_{2}^{2} +
  (-6 a + 2 b - 4 d + 2 e) \eta_{1}^{3} + \\
  &(-18 a + 14 b - 4c - 4 d + 2 e) \eta_{1}^{2} \eta_{2} +
  (-9 a + 5 b -  c - 4 d + 2 e)\eta_{1} \eta_{2}^{2} + \\
  &(9 a - 3 b + 6 d - 3 e) \eta_{1}^{2} +
  (9 a - 5b + c + 4 d - 2 e) \eta_{1} \eta_{2} +\\
  &(-3 a + b - 2 d + e) \eta_{1} \\
  (\natnabla \tilde f)_2(\bm \eta) = & (9 a - 9 b + 3 c) \eta_{1}\eta_{2}^{3} +
  (9 a - 9 b + 3 c) \eta_{1}^{2} \eta_{2}^{2} +
  (-6 a + 2 b - 4 d + 2 e) \eta_{2}^{3}+\\
  &(-18 a + 14 b - 4 c - 4 d + 2 e) \eta_{1} \eta_{2}^{2}
  (-9 a + 5 b -  c - 4 d + 2 e) \eta_{1}^{2} \eta_{2} + \\
 &(9 a - 3 b + 6 d - 3 e) \eta_{2}^{2} +
 (9 a - 5 b + c + 4 d - 2 e) \eta_{1} \eta_{2} +  \\
 &(-3 a + b - 2 d + e) \eta_{2}.
\end{align*}

Note that the case $\tilde f = \widetilde\POL$ is $b = 1$ and $a = c = d = e = 0$. Because of the multiplication by $I(\bm \eta)^{-1}$, the natural gradient $\natnabla \tilde f(\bm \eta)$ is zero in each of the simplex vertexes $(0,0)$, $(1,0)$, $(0,1)$. Because of the symmetry, the natural gradient $\natnabla \tilde f(\bm \eta)$ is zero in each of the mid-point of the edges, i.e. $(1/2,0)$, $(0,1/2)$, $(1/2,1/2)$ and at the uniform probability, $(1/3,1/3)$. We need to turn to the discussion of the Jacobian matrix of the natural gradient of $f$.

At the vertexes, the Jacobian is
\begin{equation*}
  ( -3 a + b - 2 d + e )
\begin{bmatrix}
   1 & 0 \\
0 & 1
  \end{bmatrix},
\end{equation*}
so that we want $-3 a + b - 2 d + e > 0$.

At the uniform probability $(1/3,1/3)$ the Jacobian is
\begin{equation*}
\frac 1 9  (6 a -  c + 6 - 3 e) \begin{bmatrix}
   1  & 0 \\
0 & 1
  \end{bmatrix}.
\end{equation*}
A \emph{necessary} condition of non definiteness is
\begin{equation}\label{eq:nec-H}
6 a -  c + 6 d - 3 e = 0.
\end{equation}

At the mid-points of the three edges $(1/2,0)$, $(0,1/2)$, $(1/2,1/2)$, the values of the Jacobian matrices are, respectively,
\begin{align*}
 & \begin{bmatrix}
    \frac{3}{2} a - \frac{1}{2} b + d - \frac{1}{2} e & \frac{9}{8} a -
\frac{1}{8} b - \frac{1}{8} c + d - \frac{1}{2} e \\
0 & -\frac{3}{4} a - \frac{1}{4} b + \frac{1}{4} c -  d +
\frac{1}{2} e
  \end{bmatrix}
\\
  &\begin{bmatrix}
-\frac{3}{4} a - \frac{1}{4} b + \frac{1}{4} c -  d + \frac{1}{2} e & 0 \\
\frac{9}{8} a - \frac{1}{8} b - \frac{1}{8} c + d - \frac{1}{2} e & \frac{3}{2} a - \frac{1}{2} b + d - \frac{1}{2} e
  \end{bmatrix}
\\
  &\begin{bmatrix}
\frac{3}{8} a - \frac{3}{8} b + \frac{1}{8} c & -\frac{9}{8} a + \frac{1}{8} b + \frac{1}{8} c -  d + \frac{1}{2} e \\
-\frac{9}{8} a + \frac{1}{8} b + \frac{1}{8} c -  d + \frac{1}{2} e & \frac{3}{8} a - \frac{3}{8} b + \frac{1}{8} c
  \end{bmatrix} ,
\end{align*}
and, imposing the necessary condition  $c=6a+6d-3e$ of Eq. \eqref{eq:nec-H}, they are
\begin{equation*}
  \frac 1 8 (3a-b+2d-e)\begin{bmatrix}
    4 & 1 \\ 0 & 2
  \end{bmatrix},
\quad
   \frac 1 8 (3a-b+2d-e)\begin{bmatrix}
2 & 0 \\ 1 & 4
  \end{bmatrix},
\quad
  \frac 1 8 (3a-b+2d-e)\begin{bmatrix}
3 & -1 \\ -1 &3
  \end{bmatrix}.
\end{equation*}
The two eigenvalues of each of these matrices are both negative if  $3a-b+2d-e<0$. Notice that this condition of attracting flow on the midpoints is the same condition of repelling flow on the vertexes.

Summarising, the conditions on the coefficients of a third degree symmetric polynomial that represent a measure with the properties stated above, are
\begin{equation*}
6 a -  c + 6d - 3 e = 0 \qquad 3a-b+2d-e<0 .
\end{equation*}

In conclusion, it is possible to design other measures of polarization in the form of a symmetric polynomial of degree 3, or more general forms. In particular, it would be interesting to have a measure in a form similar to the entropy. The analogy is suggested by the fact that the gradient flow of the entropy gives trajectories that move from the uniform probability to one of the vertexes of the simplex.

\section{$\POL$ as expectation along an exponential family}
\label{sec:POLasE}
We have already observed that the function $\POL$ defined in Eq. \ref{eq:POL} is an homogeneous polynomial of degree 3 in the indeterminates $\pi_x$. The general class of indexes based on homogeneous polynomials that we have discussed in Sec. \ref{sec:POLs} is of special interest because they reduce to an expectation with respect to an exponential family, as we discuss now in the case of three sample points.

If the random variables $X,Y,Z$ are i.i.d. with the distribution of $X$ supported by $\set{0,1,2}$, and coded as an exponential family i.e.
\begin{equation*}
  \begin{array}{c|cc}
    X & X_1 & X_2 \cr
\hline
0 & 0 & 0 \cr
1 & 1 & 0 \cr
2 & 0 & 1 \cr
\end{array}
, \qquad
 \begin{array}{rl}
 \pi_x &= \expof{\theta_1 x_1 + \theta_2 x_2 - \psi(\theta_1,\theta_2)}, \\
\psi(\theta_1,\theta_2) &= \logof{1 + \euler^{\theta_1}+\euler^{\theta_2}},
 \end{array}
\end{equation*}
then the joint probability function of $X,Y,Z$ is itself an exponential family,
\begin{equation}\label{eq:3-exp}
  p(x,y,z;\theta_1,\theta_2) = \euler^{\theta_1 (x_1+y_1+z_1) + \theta_2 (x_2 + y_2 + z_2) -3 \psi(\theta_1,\theta_2)}, \quad x, y, z = 0,1,2,
\end{equation}
with sufficient statistics $T_j(x,y,z) = x_j+y_j+z_j$, $j=1,2$. The model for $T_1,T_2$ is
\begin{equation}\label{eq:2-exp}
  p(t_1,t_2;\theta_1,\theta_2) = \euler^{\theta_1 t_1 + \theta_2 t_2 -3 \psi(\theta_1,\theta_2)} f(t_1,t_2),
\end{equation}
where $f$ is the table count of $T_1,T_2$. The random variables of this model are represented in Tab. \ref{tab:1}.
\begin{table}
  \caption{Sample space and random variables of the exponential family in Eq. \eqref{eq:3-exp}. The sample cases of polarization are presented in boldface in the first column.\label{tab:1}}\scriptsize
\begin{equation*}
\begin{array}{rrrrrrrrrrrr}
 & X & Y & Z & X_1 & Y_1 & Z_1 & X_2 & Y_2 & Z_2 & T_1 & T_2 \\
  \cline{2-12}
1 & 0 & 0 & 0 & 0 & 0 & 0 & 0 & 0 & 0 & 0 & 0 \\
\bm  2 & 1 & 0 & 0 & 1 & 0 & 0 & 0 & 0 & 0 & 1 & 0 \\
\bm   3 & 2 & 0 & 0 & 0 & 0 & 0 & 1 & 0 & 0 & 0 & 1 \\
\bm 4 & 0 & 1 & 0 & 0 & 1 & 0 & 0 & 0 & 0 & 1 & 0 \\
\bm  5 & 1 & 1 & 0 & 1 & 1 & 0 & 0 & 0 & 0 & 2 & 0 \\
  6 & 2 & 1 & 0 & 0 & 1 & 0 & 1 & 0 & 0 & 1 & 1 \\
\bm  7 & 0 & 2 & 0 & 0 & 0 & 0 & 0 & 1 & 0 & 0 & 1 \\
  8 & 1 & 2 & 0 & 1 & 0 & 0 & 0 & 1 & 0 & 1 & 1 \\
\bm  9 & 2 & 2 & 0 & 0 & 0 & 0 & 1 & 1 & 0 & 0 & 2 \\
\bm{10} & 0 & 0 & 1 & 0 & 0 & 1 & 0 & 0 & 0 & 1 & 0 \\
\bm{11} & 1 & 0 & 1 & 1 & 0 & 1 & 0 & 0 & 0 & 2 & 0 \\
  12 & 2 & 0 & 1 & 0 & 0 & 1 & 1 & 0 & 0 & 1 & 1 \\
\bm{13} & 0 & 1 & 1 & 0 & 1 & 1 & 0 & 0 & 0 & 2 & 0 \\
  14 & 1 & 1 & 1 & 1 & 1 & 1 & 0 & 0 & 0 & 3 & 0 \\
\bm{15} & 2 & 1 & 1 & 0 & 1 & 1 & 1 & 0 & 0 & 2 & 1 \\
  16 & 0 & 2 & 1 & 0 & 0 & 1 & 0 & 1 & 0 & 1 & 1 \\
\bm{17} & 1 & 2 & 1 & 1 & 0 & 1 & 0 & 1 & 0 & 2 & 1 \\
\bm{18} & 2 & 2 & 1 & 0 & 0 & 1 & 1 & 1 & 0 & 1 & 2 \\
\bm{19} & 0 & 0 & 2 & 0 & 0 & 0 & 0 & 0 & 1 & 0 & 1 \\
  20 & 1 & 0 & 2 & 1 & 0 & 0 & 0 & 0 & 1 & 1 & 1 \\
\bm{21} & 2 & 0 & 2 & 0 & 0 & 0 & 1 & 0 & 1 & 0 & 2 \\
  22 & 0 & 1 & 2 & 0 & 1 & 0 & 0 & 0 & 1 & 1 & 1 \\
\bm{23} & 1 & 1 & 2 & 1 & 1 & 0 & 0 & 0 & 1 & 2 & 1 \\
\bm{24} & 2 & 1 & 2 & 0 & 1 & 0 & 1 & 0 & 1 & 1 & 2 \\
 \bm{25} & 0 & 2 & 2 & 0 & 0 & 0 & 0 & 1 & 1 & 0 & 2 \\
\bm{26} & 1 & 2 & 2 & 1 & 0 & 0 & 0 & 1 & 1 & 1 & 2 \\
  27 & 2 & 2 & 2 & 0 & 0 & 0 & 1 & 1 & 1 & 0 & 3 \\
   \cline{2-12}
\end{array}
\end{equation*}
\end{table}

The marginal polytope, that is the convex set generated by the values of the sufficient statistics, is illustrated in Fig. \ref{fig:Mpolytope}. We refer to \cite{brown:86} for the relevant theory.
\begin{figure}
\begin{center}
\begin{tabular}{m{.65\textwidth}m{.30\textwidth}}
\centering
\includegraphics[width=.5\linewidth, viewport = 3 14 480 456]{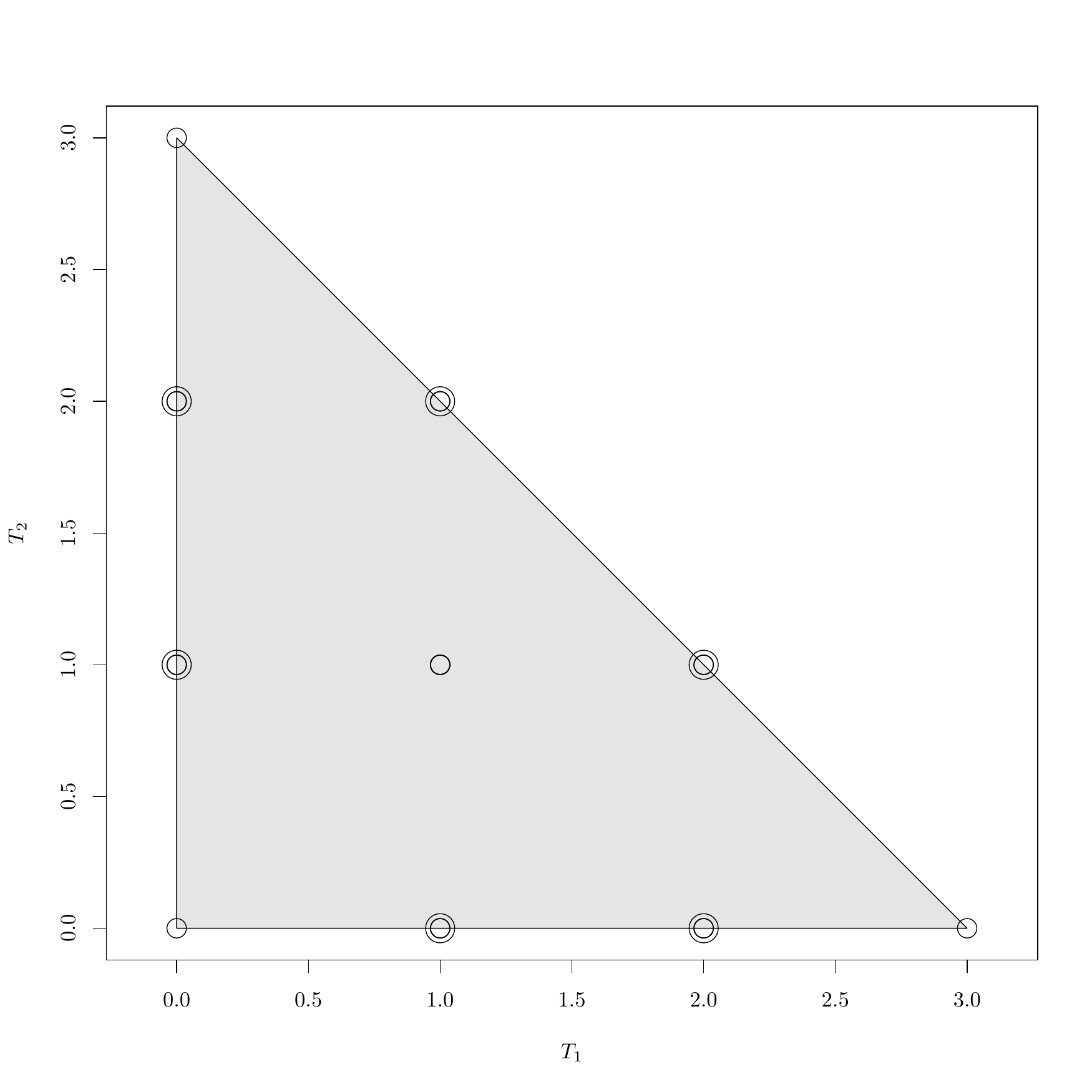}
&
$\begin{array}{rrrrr}
 T_1 \backslash T_2 & 0 & 1 & 2 & 3 \\
  \cline{2-5}
0 &   1 &   3 &   3 &   1 \\
  1 &   3 &   6 &   3 &   0 \\
  2 &   3 &   3 &   0 &   0 \\
  3 &  1 &   0 &   0 &   0 \\
   \cline{2-5}
\end{array}
$
\end{tabular}
\end{center}
\caption{Marginal polytope of the exponential family. The points in the diagram are the values of the sufficient statistics. Each one of the emphasised points correspond to three cases where polarization occurs, for example $T_1=1,T_2=2$ corresponds to the cases $\bm{18}: (2,2,1);\bm{24}: (2,1,2);\bm{26}: (1,2,2)$, in Tab. \ref{tab:1}.  At the right, the table of joint counts of $T_1$, $T_2$ is displayed.\label{fig:Mpolytope}}
\end{figure}

If $I$ is the indicator function of the set $\set{(0,1),(0,2),(1,0),(1,2),(2,0),(2,1)}$ then the value of $3 \POL$, when expressed in the parameters $\bm\theta$, is $\expectat {\bm\theta} {I(T_1,T_2)}$. Thus, the problem of the maximisation of the polarization measure is rephrased to the problem of finding the maximum of the expected value of a random variable on a given exponential family. This approach has been considered in Combinatorial Optimisation, see e.g. \cite{malago:2012thesis,malago|matteucci|pistone:2011CEC,arnoldetal:2011arXiv,malago|matteucci|pistone:2013CEC}. It has a number of issues.

 First, any convergent evolution along the exponential family has either a limit internal to the exponential family itself of a limit outside the exponential family, supported by a face of the marginal polytope, see \cite{cencov:72,rinaldo|feinberg|Zhou:2009,rauh|kahle|ay:2011,malago|pistone:arXiv1012.0637}.

  Second, the maximum of the function can be reached as a limit of expected values if, and only if, there exists, among the distributions obtained conditioning the exponential family to one face of the marginal polytope, a distribution such that the expected value of the function is equal to the maximum of the function itself. We do not enter here in a detailed discussion, see more information in \cite{malago|pistone:arXiv1012.0637,malago|pistone:2014Entropy}.

The expectation parameters of the exponential family \eqref{eq:3-exp} are
\begin{align*}
  \expectat {\bm \theta} {T_1} &= 3 \frac{\partial}{\partial\theta_1} \psi(\theta_1,\theta_2) = 3 \frac {\euler^{\theta_1}}{1 + \euler^{\theta_1} + \euler^{\theta_2}}, \\
  \expectat {\bm \theta} {T_2}&= 3 \frac{\partial}{\partial\theta_2} \psi(\theta_1,\theta_2) = 3 \frac {\euler^{\theta_2}}{1 + \euler^{\theta_1} + \euler^{\theta_2}}.
\end{align*}
These parameters are related to the $\bm\eta$ by
\begin{equation*}
\expectat {\bm\theta}{T_j} = \expectat {\bm\theta}{X_j} + \expectat {\bm\theta}{Y_j} + \expectat {\bm\theta}{Z_j} = 3\eta_j, \quad j = 1,2.
\end{equation*}

The inverse of the map
\begin{equation*}
 \frac 13 \nabla \psi \colon \bm \theta \mapsto \bm\eta
\end{equation*}
can be computed explicitly as
\begin{equation*}
  \euler^{\theta_1} = \frac{\eta_{1}}{1 - \eta_{1} - \eta_{2}}, \quad
  \euler^{\theta_2} = \frac{\eta_{2}}{1 - \eta_{1} - \eta_{2}}.
\end{equation*}

The cumulant function is expressed as a function of $\bm\eta$ as
\begin{equation*}
  3\psi(\theta_1,\theta_2) = 3\logof{1 + \euler^{\theta_1}+\euler^{\theta_2}} = \log{\frac{1}{(1 - \eta_1-\eta_2)^3}},
\end{equation*}
and the exponential family in Eq. \eqref{eq:3-exp} is expressed in the parameter $\eta$ as
\begin{multline}\label{eq:toric}
  p_{\bm\theta}(t_1,t_2) = \euler^{\theta_1 t_1}\euler^{\theta_2 t_2}\euler^{-3\psi(\theta_1,\theta_2)} f(t_1,t_2) = \\
 \eta_1^{t_1} \eta_2^{t_2} (1 - \eta_1 - \eta_2)^{3 - t_1 - t_2} f(t_1,t_2) = p_{\bm\eta}(t_1,t_2).
\end{multline}

When the probabilities are expressed in the form of Eq. \eqref{eq:toric}, we can actually  compute values for border cases \citep{pistone:2009SL}. We recover in a different way the result already known.
\begin{description}
\item[$\eta_1=0,\eta_2=1/2$:] Eq. \eqref{eq:toric} becomes $p_{0,1/2}(t_1,t_2) = 0^{t_1}(1/2)^{3-t_2} f(t_1,t_2)$, which is zero but for $t_1 = 0$. This distribution is concentrated on $\set{(0,1),(0,1),(0,2),(0,3)}$, with $p_{0,1/2}(0,t_2) = f(0,t_2)/8$. It follows $\expectat {0,1/2} {I(T_1,T_2)} = 3/4$.
\item[$\eta_1=1/2,\eta_2=0$:] Same as the previous case. The distribution is concentrated where $t_2=0$, $p_{1/2,0}(t_1,0) = f(t_1,0)/8$, and $\expectat {1/2,0} {I(T_1,T_2)} = 3/4$.
\item[$\eta_1=1/2,\eta_2=1/2$:] In this case the distribution is $(1/2)^{t_1} (1/2)^{t_2} 0^{3-t_1-t_2} f(t_1,t_2)$ with support on the face where $t_1+t_2=3$, probabilities $p_{1/2,1/2}(t,3-t) = f(t,3-t)/8$, and $\expectat {1/2,1/2} {I(T_1,T_2)} = 3/4$.
\end{description}

We have shown on an example that the maximal value of the polarization measure is actually reachable as a limit of the expected value of a random variable $I(T_1,T_2)$ on the exponential family, but this does not mean that the maximum value of the expected value is equal to the maximum value of the random variable itself. This would be true if the maximum of the random variable were reached on a face of the marginal polytope. This is discussed in the literature we have cited above.



\section{Conclusion and suggested applications}
\label{sec:app}
We have considered a statistical index different from indexes for concentration or uniformity, such as the discrete Gini index or Boltzmann-Gibbs-Shannon entropy. The polarization measure has been discussed in a dynamic way, by considering its variation and computing the directions of steepest variation. The study requires tools suitable to discuss differential equation on a differentiable manifold.

This methodology suggests to implement the velocity of variation itself as a statistical index. Consider a study of the evolution of an index in time such as \cite{pino|vidal-robert:2013}. In the time series $\pi_1,\pi_2,\dots$ the evolution of the index e.g., $\polof{\pi_1}, \polof{\pi_2},\dots$, could be misleading, because an increase in the index could be associated to a shift from a basin of attraction to a different basin of attraction. We suggest a more precise local study as follows. Given a movement from $\pi_t$ to $\pi_{t+1}$, we look for a comparison of an estimate $\overrightarrow{\pi_t\pi_{t+1}}$ of the velocity vector to the gradient field of the index, that is compute $\scalarat{\pi_t}{\overrightarrow{\pi_t\pi_{t+1}}}{\nabla \polof{\pi_t}}$. An estimator of the velocity is a mapping from a couple of densities $\pi_{\text{initial}}$, $\pi_{\text{final}}$ to the tangent space at the initial density $T_{\pi_{\text{initial}}} \Delta_n^\circ$. The inverse of such a mapping is discussed under the name of \emph{retraction} in \cite{absil|mahony|sepulchre:2008}.  The simplest example here being $\overrightarrow{\pi_t\pi_{t+1}} = (\pi_{t+1}-\pi_{t})/\pi_{t} = \pi_{t+1}/\pi_t - 1$, which is suggested by
\begin{equation*}
D\pi(t) = \derivby t \log \pi(i) = \lim_{h\to 0} h^{-1} \frac{\pi_{t+h}-\pi_{t}}{\pi_{t}}.
\end{equation*}
It is a common practice in Engineering to use the initial velocity of the Riemannian geodesic connecting $\pi_t$ to $\pi_{t+1}$ \citep{absil|mahony|sepulchre:2008}. This would require the computation of the geodesic itself, which is done using the computations schetched in App. \ref{sec:hessian}.


\def\cprime{$'$}

\appendix

\section{From Lotka-Volterra to the replicator}
\label{sec:replicator}

\subsection{Lotka-Volterra $n=2$}

From \cite{goel|maitra|montroll:1971}, with $\alpha_i,\lambda_i > 0$, $i=1,2$, and $t \mapsto (N_1(t),N_2(t)) \in \reals_>^2$, the LV equation
\begin{equation}
  \label{eq:LV1}
  \left\{
    \begin{aligned}
      \dot N_1 &= \alpha_1 N_1 - \lambda_1 N_1N_2 = N_1(\alpha_1 - \lambda_1 N_2), \\
      \dot N_2 &= -\alpha_2 N_2 + \lambda_2 N_1N_2 = N_2(-\alpha_2 + \lambda_2 N_1).
    \end{aligned}
\right.
\end{equation}
has a stationary point $(q_1,q_2)$ with
\begin{equation}
q_1 = \frac{\alpha_2}{\lambda_2}, \quad q_2 = \frac{\alpha_1}{\lambda_1}.
\end{equation}
In the variables
\begin{equation}
\left\{\begin{aligned}
z_1 &= \frac{N_1}{q_1} = \frac{\lambda_2N_1}{\alpha_2}, \\
z_2 &=\frac{N_2}{q_2} = \frac{\lambda_1N_2}{\alpha_1},
     \end{aligned}\right.
   \end{equation}
the equations are
\begin{equation}
  \label{eq:LV2}
  \left\{
    \begin{aligned}
      \dot z_1 &= \alpha_1 z_1 (1-z_2), \\
      \dot z_2 &= \alpha_2 z_2 (z_1-1),
    \end{aligned}
\right.
\quad\text{i.e.}
\quad
  \left\{
    \begin{aligned}
      \derivby t \log z_1 &= \alpha_1 (1-z_2), \\
      \derivby t \log  z_2 &= \alpha_2 (z_1-1).
    \end{aligned}
\right.
\end{equation}

It follows that
\begin{equation}
  \alpha_2 \frac{1-z_1}{z_1} \dot z_1 + \alpha_1 \frac{1 - z_2}{z_2} \dot z_2 = 0,
\end{equation}
so that
\begin{equation}
 C(z_1,z_2) = \alpha_2(\log z_1 - z_1) + \alpha_1 (\log z_2 - z_2) = \logof{(z_1\euler^{-z_1})^{\alpha_2}(z_2\euler^{-z_2})^{\alpha_1}}
\end{equation}
is constant. Note that $\lim_{z_1,z_2\to 0,+\infty} C(z_1,z_2) = -\infty$ and
\begin{equation}
  \Hessian C(z_1,z_2) = -
  \begin{bmatrix}
    \dfrac{\alpha_2}{z_1^2} & 0 \\ 0 & \dfrac{\alpha_1}{z_2^2}.
  \end{bmatrix}, \quad \text{is negative definite.}
\end{equation}
This shows the existence of periodic orbits, see \cite[p 10--11]{goel|maitra|montroll:1971}.

\subsection{Uplift of Lotka-Volterra}

Because of the periodic behaviour, we cannot expect the dynamic project to the simplex $\Delta_1$. Following \cite{hofbauer:1981}, we can go up to
\begin{equation}
\label{eq:Delta2open}
  \Delta_2^\circ = \setof{\pi = (\pi_0,\pi_1,\pi_2)}{\sum_i \pi_i = 1, \pi_i > 0, i=0,1,2}.
\end{equation}

We add a constant population of one individual $z_0=1$ and define the transformation $\reals_>^2 \leftrightarrow \Delta_2^\circ$
\begin{equation}
  \left\{
  \begin{aligned}
    \pi_0 &= \frac{z_0}{z_0+z_1+z_2} &= \frac{1}{1+z_1+z_2} \\
    \pi_1 &= \frac{z_1}{z_0+z_1+z_2} &= \frac{z_1}{1+z_1+z_2} \\
    \pi_2 &= \frac{z_2}{z_0+z_1+z_2} &= \frac{z_2}{1+z_1+z_2} \\
  \end{aligned}
  \right. , \qquad
  \left\{
  \begin{aligned}
    z_1 &= \frac{\pi_1}{\pi_0} \\
    z_2 &= \frac{\pi_2}{\pi_0}
  \end{aligned}
  \right.
\end{equation}

Note that $\dot\pi_0+\dot\pi_1+\dot\pi_2 = \derivby t \left(\pi_0+\pi_oz_1 + \pi_0z_2\right) = 0$. We have
\begin{multline}
  \dot\pi_0 = \derivby t (1+z_1+z_2)^{-1} = - \pi_0^2(\dot z_1 + \dot z_2) = - \pi_0^2(\alpha_1 z_1(1-z_2) + \alpha_2 z_2 (z_1-1)) = \\ - \pi_0^2\left(\alpha_1 \frac{\pi_1}{\pi_0}\left(1-\frac{\pi_2}{\pi_0}\right) + \alpha_2 \frac{\pi_2}{\pi_0} \left(\frac{\pi_1}{\pi_0}-1\right)\right) = - \left(\alpha_1 \pi_1\left(\pi_0-\pi_2\right) + \alpha_2 \pi_2 \left(\pi_1-\pi_0\right)\right) = \\
\pi_0\left(-\alpha_1\pi_1+\alpha_2\pi_2 +(\alpha_1-\alpha_2)\frac{\pi_1\pi_2}{\pi_0}\right),
\end{multline}

\begin{multline}
  \dot\pi_1 = \derivby t (\pi_0z_1) = \dot\pi_0 z_1 + \pi_0\dot z_1 = \dot\pi_0\frac{\pi_1}{\pi_0} + \pi_0\alpha_1 \frac{\pi_1}{\pi_0}\left(1-\frac{\pi_2}{\pi_0}\right)\\
= \pi_1\left(-\alpha_1\pi_1+\alpha_2\pi_2 +(\alpha_1-\alpha_2)\frac{\pi_1\pi_2}{\pi_0}\right) + \alpha_1 \pi_1 \left(1-\frac{\pi_2}{\pi_0}\right) \\ =
\pi_1\left(-\alpha_1\pi_1+\alpha_2\pi_2 +(\alpha_1-\alpha_2)\frac{\pi_1\pi_2}{\pi_0} + \alpha_1 \left(1-\frac{\pi_2}{\pi_0}\right)\right),
\end{multline}

\begin{multline}
  \dot\pi_2 = \derivby t (\pi_0z_2) = \dot\pi_0 z_2 + \pi_0\dot z_2 = \dot\pi_0\frac{\pi_2}{\pi_0} + \pi_0\alpha_2 \frac{\pi_2}{\pi_0}\left(\frac{\pi_1}{\pi_0}-1\right)\\
= \pi_2\left(-\alpha_1\pi_1+\alpha_2\pi_2 +(\alpha_1-\alpha_2)\frac{\pi_1\pi_2}{\pi_0}\right) + \alpha_2 \pi_2 \left(\frac{\pi_1}{\pi_0}-1\right) \\ =
\pi_2\left(-\alpha_1\pi_1+\alpha_2\pi_2 +(\alpha_1-\alpha_2)\frac{\pi_1\pi_2}{\pi_0} + \alpha_2 \left(\frac{\pi_1}{\pi_0}-1\right)\right).
\end{multline}

Define
\begin{equation}
  \label{eq:rep1}
  f_0(\pi) = 0, \quad f_1(\pi)=\alpha_1\left(1-\frac{\pi_2}{\pi_0}\right), \quad f_2(\pi) = \alpha_2\left(\frac{\pi_1}{\pi_0} - 1\right).
\end{equation}
Then
\begin{multline}
\label{eq:rep2}
  \pi\cdot f(\pi) = \pi_0 f_0(\pi) + \pi_1 f_1(\pi) + \pi_2 f_2(\pi) = \\
\pi_1 \alpha_1\left(1-\frac{\pi_2}{\pi_0}\right) + \pi_2 \alpha_2\left(\frac{\pi_1}{\pi_0} - 1\right) = \\
 \alpha_1 \pi_1 - \alpha_2 \pi_2 + (\alpha_2 - \alpha_1) \frac{\pi_1\pi_2}{\pi_0},
\end{multline}
hence the differential equation for $\pi$ is the \emph{replicator equation}
\begin{equation}
  \label{eq:rep}
  \dot \pi_i = \pi_i(f_i(\pi) - \pi\cdot f(\pi)), \quad i = 1,2,3.
\end{equation}

\section{Information Geometry of the replicator in dimension 2}

The replicator equation Eq. \eqref{eq:rep} is nothing else then a particular class of differential equations in $\Delta_2^\circ$ as a submanifold of of $\reals^3$. Let $f \colon \delta_2^\circ \to \reals^3$, where $f(\pi) \in \reals^3$ is viewed as a random variable on $\set{0,1,2}$. Then $F(\pi) = f(\pi) - \expectat {\pi}{f(\pi)}$ is a vector field of $\Delta_2^\circ$. The differential equation is $\delta \pi = F(\pi)$, see \cite{ay|erb:2005,pistone:2010SL,pistone:2013GSI}.

\subsection{Simplex parametrizations}

Let $\Delta^0_2$ be the simplex  as a sub-variety of $\mathbb R^3$: $$\Delta^0_2=\{\bm \pi=(\pi_0,\pi_1,\pi_0) \in \mathbb R^3_{\ge}\ | \ \pi_0+\pi_1+\pi_2=1\}$$

We consider different parametrizations.

\subsubsection{Solid simplex}

\begin{equation*}
\left\{ \begin{array}{rcl}
\pi_0&=&1-\theta_1-\theta_2,\\
\pi_1&=&\theta_1,\\
\pi_2&=&\theta_2,
\end{array},
  \right.  \quad
\left\{ \begin{array}{rcl}
\theta_1&=&\pi_1,\\
\theta_2&=&\pi_2,
\end{array},
  \right.
  \quad \setof{(\theta_1,\theta_2)\in \mathbb R^2_{>}}{\theta_1+\theta_2<1}.
  \end{equation*}

The Jacobian of $\bm \theta \mapsto \bm\pi(\bm\theta))$ is
\begin{equation}
  \label{eq:J1}
  J(\bm\theta) =
  \begin{bmatrix}
    -1 & -1 \\ 1 & 0 \\ 0 & 1
  \end{bmatrix}
\end{equation}

\subsubsection{Exponential family}

The exponential family
\begin{equation}
  \label{eq:expfam}
\pi = \euler^{\theta_1 X_1 + \theta_2 X_2 - \psi(\bm\theta)}, \quad
\begin{array}{c|cc}
  i & X_1 & X_2 \\ \hline 0 & 0 & 0 \\ 1 & 1 & 0 \\ 2 & 0 & 1
\end{array}, \quad \psi(\bm\theta) = \logof{1 + \euler^{\theta_1} + \euler^{\theta_2}}
\end{equation}
gives
\begin{equation}
\left\{ \begin{array}{rcl}
\pi_0&=&(1+\euler^{\theta_1}+\euler^{\theta_2})^{-1},\\
\pi_1&=&\euler^{\theta_1}(1+\euler^{\theta_1}+\euler^{\theta_2})^{-1},\\
\pi_2&=&{\euler^{\theta_2}}(1+\euler^{\theta_1}+\euler^{\theta_2})^{-1},
\end{array}
  \right.,  \quad
\left\{ \begin{array}{rcl}
\theta_1&=&\log \frac{\pi_1}{\pi_0},\\
\theta_2&=&\log \frac{\pi_2}{\pi_0},
\end{array}
  \right.,
  \quad \{(\theta_1,\theta_2)\in \mathbb R^2\}.
  \end{equation}

As $\pi_i(\bm\theta) = \frac{\partial}{\partial \theta_i} \psi(\bm\theta)$, $i=1,2$, then $(\pi_1,\pi_2)$ (the solid simplex parameters) are the \emph{expectation parameters} of the exponential family \eqref{eq:expfam}.

The Jacobian is
\begin{equation}
  \label{eq:J2}
J(\bm\theta) =
(1+\euler^{\theta_1}+\euler^{\theta_2})^{-2}
\begin{bmatrix}
  -\euler^{\theta_1} &  - \euler^{\theta_2} \\
\euler^{\theta_{1}}(1+ \euler^{\theta_{2}}) & - 	\euler^{\theta_{1} + \theta_{2}}  \\
-\euler^{\theta_{1} + \theta_{2}} &
\euler^{\theta_{2}}(1 + \euler^{\theta_{1}} )
\end{bmatrix}
 =
\begin{bmatrix}
  -\pi_0\pi_1 & -\pi_0\pi_2 \\ \pi_1(1 - \pi_1) & -\pi_1\pi_2 \\ -\pi_1\pi_2 & \pi_2(1-\pi_2)
\end{bmatrix}.
\end{equation}
Notice that, if $Z(\bm\theta) = \euler^{\psi(\bm\theta)}$, then
\begin{equation}
  \label{eq:JvsH}
J(\bm\theta) =
\begin{bmatrix}
  \nabla Z(\bm\theta)^{-1} \\ \hessianof{\psi(\bm\theta)}
\end{bmatrix}
=
\begin{bmatrix}
  \nabla Z(\bm\theta)^{-1} \\ \varat{\bm\theta}{X_1,X_2}
\end{bmatrix}.
\end{equation}

\subsubsection{Projective parametrization}

\begin{equation*}
\left\{ \begin{array}{rcl}
\pi_0&=&(1-\theta_1-\theta_2)^{-1},\\
\pi_1&=& {\theta_1}(1-\theta_1-\theta_2)^{-1},\\
\pi_2&=& {\theta_2} (1-\theta_1-\theta_2)^{-1},
\end{array}
  \right.,  \quad
\left\{ \begin{array}{rcl}
\theta_1&=&\frac{\pi_1}{\pi_0},\\
\theta_2&=& \frac{\pi_2}{\pi_0},
\end{array}
  \right.,
  \quad \set{(\theta_1,\theta_2) \in \mathbb R^2_{>}}.
  \end{equation*}

The Jacobian is
\begin{equation}
  \label{eq:J3}
  J(\bm\theta) =\left(1-{\theta_1}-{\theta_2}\right)^{-2}
  \begin{bmatrix}
    -1 &-1\\
1-{\theta_2}& -{\theta_1}\\
-{\theta_2}& 1-{\theta_1}
  \end{bmatrix}
=
\begin{bmatrix}
  -\pi_0^2 & -\pi_0^2 \\ \pi_0(\pi_0 - \pi_2) & -\pi_0\pi_1 \\ -\pi_0\pi_2 & \pi_0(\pi_0-\pi_1)
\end{bmatrix}
\end{equation}

\subsection{Differential equations in different parametrizations}

A differential equation $\frac d {dt}\bm{\pi}(t)=\bm{F}(\bm{\pi}(t))$  in the simplex $\Delta_2^\circ$, considered as sub-variety of $\reals^3$, computed componentwise, has the form:
 \begin{equation}\label{eq:equation}
\left\{ \begin{array}{rcl}
\dot{\pi}_0(t)&=&F_0\left(\pi_0(t),\pi_1(t),\pi_2(t)\right),\\
\dot{\pi}_1(t)&=&F_1\left(\pi_0(t),\pi_1(t),\pi_2(t)\right),\\
\dot{\pi}_2(t)&=&F_2\left(\pi_0(t),\pi_1(t),\pi_2(t)\right),
\end{array}
  \right.
  \end{equation}
where $\bm F$  has to be orthogonal to the constant vector, i.e. $\bm{F \cdot 1}=0$, or $F_0+F_1+F_2=0$. Because of this condition, it is enough to consider
 \begin{equation}\label{eq:shorteqn}
\left\{ \begin{array}{rcl}
\dot{\pi}_1(t)&=&F_1\left(1-\pi_1(t)-\pi_2(t),\pi_1(t),\pi_2(t)\right),\\
\dot{\pi}_2(t)&=&F_2\left(1-\pi_1(t)-\pi_2(t),\pi_1(t),\pi_2(t)\right).
\end{array}
  \right.
  \end{equation}

The differential equation \eqref{eq:equation}, written in the parameter $\bm{\theta}$  has the form
$\frac d {dt}\bm{\pi}(\bm{\theta}(t))=\bm{F}(\bm{\pi}(\bm{\theta}(t)))$, i.e.:
\begin{equation}\label{eq-diff}
\frac{\partial}{\partial \theta_1} \bm{\pi}(\bm{\theta}(t))\ \dot{\theta}_1(t) +
\frac{\partial}{\partial \theta_2} \bm{\pi}(\bm{\theta}(t))\ \dot{\theta}_2(t)= J\bm\pi(\bm\theta(t)) \bm{\dot\theta}(t) = \bm{F}(\bm{\pi}(\bm{\theta}(t)))
\end{equation}

\subsubsection{Solid simplex} In this case, the Jacobian is \eqref{eq:J1} and

We have: $$\frac{\partial}{\partial \theta_1} \bm{\pi}(\bm{\theta}(t)) =(-1,1,0)^t \qquad \frac{\partial}{\partial \theta_2} \bm{\pi}(\bm{\theta}(t)) =(-1,0,1)^t$$ and Eq. \eqref{eq-diff} in matrix form become:
\begin{equation*}
\begin{bmatrix}
-1&-1\\
1&0\\
0&1
\end{bmatrix}
\begin{bmatrix}
\dot{\theta}_1\\
\dot{\theta}_2
\end{bmatrix} =
\begin{bmatrix}
F_0\\
F_1\\
F_2
\end{bmatrix},
\end{equation*}
which implies: $\dot{\theta}_1=F_1$ and $\dot{\theta}_2=F_2$, i.e. Eq. \eqref{eq:shorteqn}.

The choice to leave out $F_0$ is of course arbitrary.

\subsubsection{Exponential family} In this case the Jacobian is \eqref{eq:J2}, hence Eq. \eqref{eq-diff} becomes
\begin{equation*}
(1+\euler^{\theta_1}+\euler^{\theta_2})^{-2}
\begin{bmatrix}
  -\euler^{\theta_1} &  - \euler^{\theta_2} \\
\euler^{\theta_{1}}(1+ \euler^{\theta_{2}}) &  -	\euler^{\theta_{1} + \theta_{2}}  \\
-\euler^{\theta_{1} + \theta_{2}} &
\euler^{\theta_{2}}(1 + \euler^{\theta_{1}} )
\end{bmatrix}
\begin{bmatrix}
\dot{\theta}_1\\
\dot{\theta}_2
\end{bmatrix} =
\begin{bmatrix}
F_0\\
F_1\\
F_2
\end{bmatrix}.
\end{equation*}

We need to compute the inverse of the lower block, i.e.

\begin{align}
  \hessianof{\psi(\bm\theta)}^{-1} &=
   \left(1 + \euler^{\theta_1} + \euler^{\theta_2}\right)^{2}
  \begin{bmatrix}
\euler^{\theta_1}\left(1+\euler^{\theta_2}\right)&
-\euler^{\theta_1}\euler^{\theta_2}\\
-\euler^{\theta_1}\euler^{\theta_2}&
\euler^{\theta_2}\left(1+\euler^{\theta_1}\right)
  \end{bmatrix}^{-1} \notag \\
&= \frac{1 + \euler^{\theta_1} + \euler^{\theta_2}}{\euler^{\theta_1+\theta_2}}
  \begin{bmatrix}
\euler^{\theta_2}\left(1+\euler^{\theta_1}\right)&
\euler^{\theta_1+\theta_2}\\
\euler^{\theta_1+\theta_2}&
\euler^{\theta_1}\left(1+\euler^{\theta_2}\right)
  \end{bmatrix} \notag \\
&= \left(1 + \euler^{\theta_1} + \euler^{\theta_2}\right)
  \begin{bmatrix}
\frac{1+\euler^{\theta_1}}{\euler^{\theta_1}}&
1 \\
1 &
\frac{1+\euler^{\theta_2}}{\euler^{\theta_2}}
  \end{bmatrix}. \notag
\end{align}

The differential equation in the parameters $\bm\theta$ is

\begin{equation*}
\left\{\begin{array}{rcl rcll }
\dot{\theta}_1&=&\left(1 + \euler^{\theta_1} + \euler^{\theta_2}\right)\frac {1+\euler^{\theta_1}}{\euler^{\theta_1}}& F_1(\bm\theta)&+ & \left(1 + \euler^{\theta_1} + \euler^{\theta_2}\right)&F_2(\bm\theta),\\
\dot{\theta}_2&=&\left(1 + \euler^{\theta_1} + \euler^{\theta_2}\right) &F_1(\bm\theta)&+ &\left(1 + \euler^{\theta_1} + \euler^{\theta_2}\right)
\frac {1+\euler^{\theta_2}}{\euler^{\theta_2}}&F_2(\bm\theta),\end{array}\right.
\end{equation*}
or, written in $\pi$:
\begin{equation}\label{eq-dif-exp-pi}
\left\{\begin{array}{rcl rcll }
\dot{\theta}_1&=&\left(\frac {1}{\pi_1}+\frac {1}{\pi_0}\right)& F_1&+ & \frac {1}{\pi_0}&F_2\\
\dot{\theta}_2&=&\frac {1}{\pi_0} &F_1&+ &\left(\frac {1}{\pi_2}+\frac {1}{\pi_0}\right)
&F_2\end{array}\right.
\end{equation}

\subsubsection{Projective parametrization}

Eq. \eqref{eq-diff} in matrix form becomes:
\begin{equation*} \left(1-{\theta_1}-{\theta_2}\right)^{-2}
\left( \begin{array}{r r}
-1&-1\\
1-{\theta_2}&-{\theta_1}\\
-{\theta_2}&1-{\theta_1}
\end{array}\right)\
\left( \begin{array}{r}
\dot{\theta}_1\\
\dot{\theta}_2
\end{array}\right) =
\left( \begin{array}{r}
F_0\\
F_1\\
F_2
\end{array}\right)
\end{equation*}
whose solution in $\dot{\theta}_1,\dot{\theta}_2$ is:
\begin{equation*}
\left\{\begin{array}{rcl rcll }
\dot{\theta}_1&=&\left(1-\theta_1\right)\left(1-{\theta_1}-{\theta_2}\right)& F_1&+ & \theta_1\left(1-{\theta_1}-{\theta_2}\right) &F_2\\
\dot{\theta}_2&=&\theta_2\left(1-{\theta_1}-{\theta_2}\right) &F_1&+ &\left(1-{\theta_1}-{\theta_2}\right)
\left(1-{\theta_2}\right)&F_2\end{array}\right.
\end{equation*}
and written in $\pi$:
\begin{equation*}
\left\{\begin{array}{rcl rcll }
\dot{\theta}_1&=&\left(1-\pi_2\right)\pi_0^{-2}& F_1&+ & \pi_1\pi_0^{-2} &F_2\\
\dot{\theta}_2&=&\pi_2\pi_0^{-2} &F_1&+ &\left(1-\pi_1\right)\pi_0^{-2}&F_2\end{array}\right.
\end{equation*}

Notice that, in each parametrization, the condition $F_0+F_1+F_2=0$ is verified.

\section{Replicator equations}

The differential equation of the replicator, see \cite{ay|erb:2005}, is:
$$
\dot{x}_i=x_i\left(f_i(\bm{x})-\bm{x} \cdot \bm{f} \right)$$
On the simplex we have:
\begin{equation} \label{eq-repl}
\left\{ \begin{array}{rcl}
F_1(\bm{\pi})= \pi_1 \left(f_1(\bm{\pi})- \bm{\pi} \cdot \bm{f} \right)\\
F_2(\bm{\pi})= \pi_2 \left(f_2(\bm{\pi})- \bm{\pi} \cdot \bm{f} \right)
\end{array} \right.
\end{equation}
where the expected value of $f$ is
\begin{equation}
  \label{eq:3}
  \bm{\pi} \cdot \bm{f}= f_0\pi_0 + f_1\pi_1+ f_2\pi_2 = f_0+(f_1-f_0)\pi_1+(f_2-f_0)\pi_2,
\end{equation}
so that
\begin{align}
f_1 - \bm{\pi} \cdot \bm{f} &= (1-\pi_1)(f_1-f_0) - \pi_2 (f_2-f_0),\\
f_2 - \bm{\pi} \cdot \bm{f} &= -\pi_1(f_1-f_0) + (1-\pi_2)(f_2-f_0),
\end{align}
and Eq. \eqref{eq-repl} becomes:
\begin{equation}
\left\{ \begin{array}{rcl}
F_1= \pi_1 \left((f_1-f_0)(1- \pi_1)-(f_2-f_0) \pi_2 \right),\\
F_2= \pi_2 \left(-(f_1-f_0) \pi_1 + (f_2-f_0)(1- \pi_2)\right),
\end{array} \right.
\end{equation}
or
\begin{equation}
  \begin{bmatrix}
    F_1 \\ F_2
  \end{bmatrix} =
  \begin{bmatrix}
    \pi_1(1-\pi_1) & -\pi_1\pi_2 \\ -\pi_1\pi_2 & \pi_2(1-\pi_2)
  \end{bmatrix}
  \begin{bmatrix}
    f_1 - f_0 \\ f_2 - f_0
  \end{bmatrix}.
\end{equation}

Replacing  $F_1$ and $F_2$ in the differential equations with the different parametrizations, written as a function of $\bm \pi$,  we have, in the exponential parametrization,
\begin{multline}
  \begin{bmatrix}
    \frac1{\pi_0}+\frac1{\pi_1} & \frac1{\pi_0} \\
\frac1{\pi_0} & \frac1{\pi_0}+\frac1{\pi_2}
  \end{bmatrix}
\begin{bmatrix}
    \pi_1(1-\pi_1) & -\pi_1\pi_2 \\ -\pi_1\pi_2 & \pi_2(1-\pi_2)
  \end{bmatrix} \\ =
\left(\frac1{\pi_0}
\begin{bmatrix}
  1 & 1 \\ 1 & 1
\end{bmatrix} +
\begin{bmatrix}
  \pi_1^{-1} & 0 \\ 0 & \pi_2^{-1}
\end{bmatrix}\right)
\begin{bmatrix}
    \pi_1(1-\pi_1) & -\pi_1\pi_2 \\ -\pi_1\pi_2 & \pi_2(1-\pi_2)
  \end{bmatrix} \\ =
  \begin{bmatrix}
    \pi_1 & \pi_2 \\ \pi_1 & \pi_2
  \end{bmatrix} +
  \begin{bmatrix}
    1-\pi_1 & -\pi_2 \\ -\pi_1 & 1 - \pi_2
  \end{bmatrix}
= I_2
\end{multline}
so that the differential equations are
\begin{equation}\label{eq:diffexp}
\left\{\begin{aligned}
\dot{\theta}_1 &= f_1-f_0 \\
\dot{\theta}_2 &= f_2-f_0
\end{aligned}
\right.
\end{equation}

In the projective parametrization
\begin{multline}
  \begin{bmatrix}
    (1-\pi_2)\pi_0^{-2} & \pi_1\pi_0^{-2} \\
\pi_2\pi_0^{-2} & (1-\pi_1)\pi_0^{-2}
  \end{bmatrix}
\begin{bmatrix}
    \pi_1(1-\pi_1) & -\pi_1\pi_2 \\ -\pi_1\pi_2 & \pi_2(1-\pi_2)
  \end{bmatrix} \\ =
\frac1{\pi_0^2}  \begin{bmatrix}
    1-\pi_2 & \pi_1 \\
\pi_2 & 1-\pi_1
  \end{bmatrix}
\begin{bmatrix}
    1-\pi_1 & -\pi_1 \\ - \pi_2 & 1-\pi_2
  \end{bmatrix}
  \begin{bmatrix}
    \pi_1 & 0 \\ 0 & \pi_2
  \end{bmatrix}
 =
\begin{bmatrix}
  \frac{\pi_1}{\pi_0} & 0 \\ 0 & \frac{\pi_2}{\pi_0}
\end{bmatrix}
\end{multline}
hence the differential equations are

\begin{equation}
\left\{\begin{aligned}
\dot{\theta}_1 &= \theta_1 (f_1-f_0) \\
\dot{\theta}_2 &= \theta_2 (f_2-f_0)
\end{aligned}
\right.
\end{equation}

\section{Second order calculus}
\label{sec:hessian}

The expectation parameters are
\begin{equation*}
  \eta_i(\bm\theta) = \frac{\euler^{\theta_i}}{1+\sum_{j=1}^d \euler^{\theta_j}}
\end{equation*}
and the information matrix is
\begin{equation*}
  I(\bm\theta) = \diagof{\bm\eta(\bm\theta)} - \bm\eta(\bm\theta)\bm\eta(\bm\theta)'
\end{equation*}

The precision matrix is
\begin{equation*}
  I^{-1} = \diagof{\bm\eta}^{-1} + (1-\absoluteval{\bm\eta})^{-1} \bm 1 \bm 1'.
\end{equation*}
In fact
\begin{multline*}
\left(\diagof{\bm\eta} - \bm\eta\bm\eta'\right)\left(\diagof{\bm\eta}^{-1} + (1-\absoluteval{\bm\eta})^{-1} \bm 1 \bm 1'\right) = \\
I + (1-\absoluteval{\bm\eta})^{-1} \diagof{\bm\eta}\bm 1 \bm 1' - \bm\eta\bm\eta' \diagof{\bm\eta}^{-1} - (1-\absoluteval{\bm\eta})^{-1} \bm\eta\bm\eta' \bm 1 \bm 1' = \\
I + (1-\absoluteval{\bm\eta})^{-1} \bm\eta \bm 1' - \bm\eta\bm 1' - \absoluteval{\bm\eta} (1-\absoluteval{\bm\eta})^{-1} \bm\eta \bm 1' = I.\\
\end{multline*}

The derivative in the direction $\bm h$ of $\bm\eta \mapsto \diagof{\bm\eta} - \bm\eta\bm\eta'$ at $\bm\eta$ is
\begin{equation*}
  \bm h \mapsto \diagof{\bm h} - \bm h \bm \eta' - \bm \eta \bm h',
\end{equation*}
hence, composing with
\begin{equation*}
  \partial_i \bm\eta(\bm\theta) = I(\bm\theta) \bm e_i = \left(\diagof{\bm\eta(\bm\theta)} - \bm\eta(\bm\theta)\bm\eta(\bm\theta)'\right) \bm e_i = \eta_i(\bm\theta) \left(\bm e_i - \bm\eta(\bm\theta) \right),
\end{equation*}
we obtain
\begin{align*}
  \partial_i I(\bm\theta) &= \diagof{\eta_i(\bm\theta) \left(\bm e_i - \bm\eta(\bm\theta) \right)} - \eta_i(\bm\theta) \left(\bm e_i - \bm\eta(\bm\theta) \right) \bm \eta(\bm\theta)' - \bm \eta \eta_i(\bm\theta) \left(\bm e_i - \bm\eta(\bm\theta) \right)' \\
&= \eta_i(\bm\theta) \left(\diagof{\left(\bm e_i - \bm\eta(\bm\theta) \right)} - \left(\bm e_i - \bm\eta(\bm\theta) \right) \bm \eta(\bm\theta)' - \bm \eta(\bm\theta) \left(\bm e_i - \bm\eta(\bm\theta) \right)'\right).
\end{align*}

The following equality is to be used below.

\begin{multline*}
  I^{-1} \partial_i I = \\
\left(\diagof{\bm\eta}^{-1} + (1-\absoluteval{\bm\eta})^{-1} \bm 1 \bm 1'\right)\left(\eta_i \left(\diagof{\left(\bm e_i - \bm\eta \right)} - \left(\bm e_i - \bm\eta \right) \bm \eta' - \bm \eta \left(\bm e_i - \bm\eta \right)'\right)\right)
= \\ \eta_i \diagof{\bm\eta}^{-1} \left(\diagof{\left(\bm e_i - \bm\eta \right)} - \left(\bm e_i - \bm\eta \right) \bm \eta' - \bm \eta \left(\bm e_i - \bm\eta \right)'\right) + \\ \eta_i (1 - \absoluteval{\bm\eta})^{-1} \bm 1 \bm 1' \left(\diagof{\left(\bm e_i - \bm\eta \right)} - \left(\bm e_i - \bm\eta \right) \bm \eta' - \bm \eta \left(\bm e_i - \bm\eta \right)'\right) = \\
\end{multline*}

We conclude by briefly reviewing the computation of the metric connection (Levi-Civita connection) which is required by e.g., the computation of the Riemannian Hessian.

Let $\bm{\pi}(t)$ be an univariate statistical model. Let $F(\bm{\pi}(t))$ and $G(\bm{\pi}(t))$ be centered pivotal quantities (vector fields of the statistical model). The variation of $\textrm{Cov}(F(\bm{\pi}(t)),G(\bm{\pi}(t)))$ in the time is:
\begin{multline*}
\frac d {dt}\left(\textrm{Cov}(F(\bm{\pi}(t)),G(\bm{\pi}(t)))\right)= \frac d {dt} \langle F(\bm{\pi}(t)), G(\bm{\pi}(t)) \rangle_{\bm{\pi}(t)}= \\ \frac d {dt} \sum_{i,j} F_i(\bm{\pi}(t)) G_i(\bm{\pi}(t)) I_{ij}(\bm{\pi}(t))=\\
\sum_{i,j}\left( \frac d {dt} F_i\right)  G_i I_{ij}+\sum_{i,j} F_i \left( \frac d {dt}  G_i \right)I_{ij}+ \sum_{i,j} F_i G_i  \left( \frac d {dt} I_{ij}\right)=\\
 \langle  \frac d {dt} F, G  \rangle_{\bm{\pi}(t)}+\langle   F, \frac d {dt}G \rangle_{\bm{\pi}(t)}+
 \frac 1 2  \langle I^{-1} \frac d {dt}\left(I \right) F, G  \rangle_{\bm{\pi}(t)}+
  \frac 1 2  \langle  F,I^{-1} \frac d {dt}\left(I \right) G \rangle_{\bm{\pi}(t)}=\\
   \langle  \frac d {dt} F+\frac 1 2 I^{-1} \frac d {dt}\left(I \right)F,G \rangle_{\bm{\pi}(t)}+
   \langle  F, \frac d {dt}G +\frac 1 2 I^{-1}  \frac d {dt}\left(I \right)G\rangle_{\bm{\pi}(t)},
\end{multline*}
see \cite{docarmo:1992,pistone:2013GSI}.

The last line can be properly written as:
\begin{multline*}
   \langle  \frac d {dt}\left(F_i(\bm{\pi}(t))\right)_{i=1}^n +\frac 1 2 I^{-1}(\bm{\pi}(t)) \frac d {dt}\left(I(\bm{\pi}(t)) \right)F(\bm{\pi}(t)),G \rangle_{\bm{\pi}(t)}+\\
   \langle  F(\bm{\pi}(t)), \frac d {dt}\left(G_i(\bm{\pi}(t))\right)_{i=1}^n +\frac 1 2 I^{-1}(\bm{\pi}(t))  \frac d {dt}\left(I(\bm{\pi}(t)) \right)G\rangle_{\bm{\pi}(t)},
\end{multline*}
which defines an operator (affine connection \cite{docarmo:1992} or metric derivative \cite{lang:1995})

\begin{equation*}
  (F,\bm\alpha) \mapsto \nabla F \bm\alpha + \frac12 I^{-1} (\nabla I \bm\alpha) F
\end{equation*}

If the model is such that $ I^{-1}(\bm{\pi}(t)) \frac d {dt}\left(I(\bm{\pi}(t))\right)=0 $, then
\begin{multline*}
\frac d {dt}\left(\textrm{Cov}(F(\bm{\pi}(t)),G(\bm{\pi}(t)))\right)=  \langle  \frac d {dt}\left(F_i(\bm{\pi}(t))\right)_{i=1}^n ,G \rangle_{\bm{\pi}(t)}+
   \langle  F(\bm{\pi}(t)), \frac d {dt}\left(G_i(\bm{\pi}(t))\right)_{i=1}^n \rangle_{\bm{\pi}(t)}.
   \end{multline*}

\end{document}